\newcommand{\ms}[1]{\mbox{\tiny$#1$}}
\newcommand{\epsh}[2]
          {\begin{array}{c} \hspace{-1.3mm}
         \raisebox{-4pt}{\epsfig{figure=#1,height=#2}}
         \hspace{-1.9mm}\end{array}}
\newcommand{\coev}{\stackrel{\longrightarrow}{\operatorname{coev}}}
\newcommand{\ev}{\stackrel{\longrightarrow}{\operatorname{ev}}}
\newcommand{\tev}{\stackrel{\longleftarrow}{\operatorname{ev}}}
\newcommand{\tcoev}{\stackrel{\longleftarrow}{\operatorname{coev}}}
\newcommand{\Id}{\operatorname{Id}}
\newcommand{\End}{\operatorname{End}}
\newcommand{\Hom}{\operatorname{Hom}}
\newcommand{\tr}{\operatorname{tr}}
\newcommand{\Vect}{\operatorname{Vect}}
\newcommand{\mtr}{\operatorname{\mathsf{t}}}
\newtheorem{theo}{Theorem}[section]
\newtheorem{Def}[theo]{Definition}
\newtheorem{The}[theo]{Theorem}
\newtheorem{Lem}[theo]{Lemma}
\newtheorem{Exem}[theo]{Example}
\newtheorem{Pro}[theo]{Proposition}
\newtheorem{HQ}[theo]{Corollary}
\begin{document}
\title{Modified trace from pivotal Hopf $G$-coalgebra}     


\author{Ngoc Phu HA}        
\address{Universit\'e de Bretagne Sud $\&$ Laboratoire de Math\'ematiques de Bretagne Atlantique, UMR CNRS 6205, Centre de Recherche, Campus de Tohannic, BP 573     
F-56017 Vannes, France }
\email{ngoc-phu.ha@univ-ubs.fr}
\address{Hung Vuong university, Department of mathematics-informatics, Nong Trang, Viet Tri, Phu Tho, Viet Nam}
\email{ngocphu.ha@hvu.edu.vn}

\maketitle

\begin{abstract}
In a recent paper the authors Beliakova, Blanchet and Gainutdinov have shown that the modified trace on the category $H$-pmod of the projective modules corresponds to the symmetrised integral on the finite dimensional pivotal Hopf algebra $H$. We generalize this fact to the context of $G$-graded categories and Hopf $G$-coalgebra studied by Turaev-Virelizier. We show that the symmetrised $G$-integral on a finite type pivotal Hopf $G$-coalgebra induces a modified trace in the associated $G$-graded category.

\end{abstract}
\vspace{25pt}

MSC:	57M27, 17B37

Key words: modified trace, $G$-integral, symmetrised $G$-integral, pivotal Hopf $G$-coalgebra. 

\section{Introduction}
The notion of a modified trace was introduced by N. Geer, J. Kujawa and B. Patureau-Mirand in the article \cite{NgJkBp13}. This is one of the topological tools which can be used first to renormalize the Reshetikhin-Turaev invariant of links. Later F. Costantino, N. Geer and B. Patureau-Mirand used the modified trace to construct a class of invariants of $3$-manifolds (CGP invariant) via link surgery presentations (see \cite{FcNgBp14}). The modified trace is also used to construct invariants of $3$-manifolds of type Reshetikhin-Turaev from quantum group associated to the super Lie algebra $\mathfrak{sl}(2|1)$ (see \cite{Ha16}) and for constructing the logarithmic invariant of type Hennings (see \cite{BBGe17}). In order to construct invariant of $3$-manifolds, Hennings proposed a method based on the theory of integral for a finite dimensional Hopf algebra (see \cite{Henning96}). The notion of integral was introduced by Larson and Sweedler in \cite{LaSweedler69} and is studied in the book \cite{Ra12} of Radford. It is known that under some assumption, both the space of modified trace and that of integral are one dimensional (see \cite{GR17, Ra12}). A close relation between the modified trace and the integral has been established recently in \cite{BBG17}. The authors proved that a symmetrised integral for a finite dimensional pivotal Hopf algebra gives a modified trace $t$ on $H$-pmod with an explicit formula. We would like to adapt these results to the unrestricted quantum groups at root of unity. They are infinite dimensional Hopf algebra but can be understood as a Hopf $G$-coalgebra organized into a bundles of algebra over a Lie group. For a finite type Hopf $G$-coalgebra $H=\left(H_\alpha\right)_{\alpha\in G}$ there exists a family of linear forms on $H_{\alpha}$, called $G$-integral (see \cite{Vire02}). The aim of this article is to establish a correspondence between the $G$-integral for the finite type unimodular pivotal Hopf $G$-coalgebra $H$ and the modified trace in the associated $G$-graded category $H$-mod.
We introduce now these two notions.
\subsection*{$G$-integral} Let $H=(\{H_{\alpha}, m_{\alpha}, 1_{\alpha}\}, \Delta, \varepsilon, S)$ be a Hopf $G$-coalgebra over a field $\Bbbk$ (see in Section \ref{Section pivot}). A {\em right $G$-integral} for the Hopf $G$-coalgebra $H$ is a family of $\Bbbk$-linear forms $\mu=(\mu_{\alpha}:\ H_{\alpha} \rightarrow \Bbbk)_{\alpha \in G}$ satisfying
\begin{equation}\label{right int}
(\mu_{\alpha} \otimes \Id_{H_{\beta}})\Delta_{\alpha, \beta}(x)=\mu_{\alpha\beta}(x)1_{\beta}\ \text{for any}\ x \in H_{\alpha\beta}.
\end{equation}
Similarly, a {\em left $G$-integral} $\mu_{\alpha}^{l}\in \prod_{\alpha\in G} H_{\alpha}^{*}$ satisfies 
\begin{equation*}
(\Id_{H_{\alpha}} \otimes \mu_{\beta}^{l})\Delta_{\alpha, \beta}(x)=\mu_{\alpha\beta}^{l}(x)1_{\alpha}\ \text{for any}\ x \in H_{\alpha\beta}.
\end{equation*}
The linear form $\mu_1$ is an usual right integral for the Hopf algebra $H_1$ (see e.g \cite{Ra12}). If $H$ is a finite type Hopf $G$-coalgebra, i.e. a Hopf $G$-coalgebra in which $\dim(H_{\alpha})< + \infty $ for any $\alpha\in G$, the space of right (resp. left) $G$-integral is known to be 1-dimensional (see e.g \cite{Vire02}).\\
A {\em pivotal Hopf $G$-coalgebra} is a pair $(H, g)$, where the pivot is the family $g=(g_{\alpha})_{\alpha \in G}\in \prod_{\alpha\in G}H_{\alpha}$ satisfying
$\Delta_{\alpha, \beta}(g_{\alpha \beta})=g_{\alpha} \otimes g_{\beta}$ for any $\alpha, \beta \in G,\  \varepsilon(g_1)=1_{\Bbbk}$, and $
S_{\alpha^{-1}}S_{\alpha}(x)=g_{\alpha}xg_{\alpha}^{-1}$ for any $ x \in H_{\alpha}$.
Note that $g^{-1}=(S_{\alpha^{-1}}(g_{\alpha^{-1}}))_{\alpha \in G}$, i.e. $g_{\alpha}^{-1}=S_{\alpha^{-1}}(g_{\alpha^{-1}})$ (see e.g \cite{Vire02}). In particular, $g_1$ is a pivotal element for $H_1$ and $g_1$ is invertible with $g_1^{-1}=S_1(g_1),\ \varepsilon(g_1)=0$ (see e.g \cite{ChKa95}).\\
The {\em symmetrised right $G$-integral} on $(H, g)$ associated with $\mu$ is the family $\widetilde{\mu}=(\widetilde{\mu}_{\alpha})_{\alpha\in G}\in \prod_{\alpha\in G}H_{\alpha}^*$ defined by
\begin{equation*}
\widetilde{\mu}_{\alpha}(x):=\mu_{\alpha}(g_{\alpha}x)\ \text{for any}\  x\in H_{\alpha}.
\end{equation*}
Similarly, a {\em symmetrised left $G$-integral} on $(H, g)$ is
\begin{equation} \label{left int}
\widetilde{\mu}_{\alpha}^{l}(x):=\mu_{\alpha}^{l}(g_{\alpha}^{-1}x)\ \text{for any}\ x\in H_{\alpha}.
\end{equation}
In the case $(H, g)$ is unimodular, i.e. $H_1$ is unimodular, we show that the symmetrised $G$-integrals are symmetric linear forms on $H$ and they are non-degenerate (see Proposition \ref{sr int sym}).
\subsection*{Modified trace} 
Let $\mathcal{C}$ be a {\em pivotal} $\Bbbk$-linear category \cite{BePM12}. Let $\textbf{Proj}(\mathcal{C})$ be the tensor ideal of projective objects of $\mathcal{C}$. 
A {\em modified trace} on ideal $\textbf{Proj}(\mathcal{C})$ is a family of $\Bbbk$-linear forms
$\mtr=\{\mtr_{P}:\ \End_{\mathcal{C}}(P) \rightarrow \Bbbk\}_{P \in \textbf{Proj}(\mathcal{C})}$ satisfying the cyclicity property and the partial trace property (see in Section \ref{Section modified trace}).

\subsection*{Main results} Let $(H,g)=(\{H_{\alpha}, m_{\alpha}, 1_{\alpha}\}, \Delta, \varepsilon, S, g)$ be a finite type unimodular pivotal Hopf $G$-coalgebra. If $\mtr$ is a right (resp. left) modified trace on $H$-pmod, it defines a family of linear forms $\lambda^{\mtr}=(\lambda_{\alpha}^{\mtr})_{\alpha\in G}\in \prod_{\alpha\in G}H_{\alpha}^{*}$ by $\lambda_{\alpha}^{\mtr}(h)=\mtr_{H_{\alpha}}(R_h)$ for $h\in H_{\alpha}$, $H_{\alpha}$ is a projective object of $H$-mod and $R_h$ is the right multiplication of $H_{\alpha}$.
\begin{The} \label{main theorem}
The application $\mtr\mapsto \lambda^{\mtr}$ defined above gives a bijection between the space of right (resp. left) modified traces and the space of symmetrised right (resp. left) $G$-integrals.\\
Furthermore, $(H, g)$ is $G$-unibalanced if and only if the right modified trace is also left.
\end{The}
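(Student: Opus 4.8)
The plan is to use the regular modules $H_\alpha$, $\alpha\in G$, as projective generators of the graded category $H$-mod. Since every endomorphism of the left module $H_\alpha$ is a right multiplication $R_h$ for a unique $h\in H_\alpha$, the assignment $h\mapsto R_h$ identifies $\End_{H_\alpha}(H_\alpha)$ with $H_\alpha$, so $\lambda^{\mtr}_\alpha(h)=\mtr_{H_\alpha}(R_h)$ is well defined. Because any projective object of the $\alpha$-graded component is a direct summand of a finite direct sum of copies of $H_\alpha$, and a modified trace is determined by its restriction to such free modules through the partial trace and cyclicity properties, the family $\lambda^{\mtr}$ determines $\mtr$ entirely. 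This yields injectivity of $\mtr\mapsto\lambda^{\mtr}$ at once and reduces the whole statement to an analysis of the regular modules.

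First I would show that the map lands in the space of symmetrised right $G$-integrals, that is, that each $\lambda^{\mtr}$ equals $\widetilde\mu$ for some right $G$-integral $\mu$. The two defining properties of a modified trace translate into the two features of a symmetrised integral. Applying the partial trace property to an endomorphism of $H_\alpha\otimes H_\beta$ and using that the tensor product in the $G$-graded category is governed by the coproduct $\Delta_{\alpha,\beta}$, I expect the right partial trace over $H_\beta$ to reproduce exactly the left-hand side of the $G$-integral equation \eqref{right int}; this forces $\mu_\alpha(x):=\lambda^{\mtr}_\alpha(g_\alpha^{-1}x)$ to satisfy \eqref{right int}. The cyclicity property, combined with the relation $S_{\alpha^{-1}}S_\alpha(x)=g_\alpha x g_\alpha^{-1}$ and the pivotal compatibility $\Delta_{\alpha,\beta}(g_{\alpha\beta})=g_\alpha\otimes g_\beta$, then yields that $\lambda^{\mtr}_\alpha$ is a symmetric linear form of the prescribed shape $\widetilde\mu_\alpha(x)=\mu_\alpha(g_\alpha x)$.

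For surjectivity I would argue in the opposite direction: starting from a symmetrised right $G$-integral $\widetilde\mu$, set $\mtr_{H_\alpha}(R_h):=\widetilde\mu_\alpha(h)$ on the regular modules, and extend to an arbitrary projective $P$ by writing $P$ as a summand of a free module through idempotents, putting $\mtr_P(f):=\mtr_{H_\alpha^{\oplus n}}(\iota f\pi)$. The non-degeneracy and symmetry of $\widetilde\mu$ from Proposition \ref{sr int sym} guarantee that this is well defined and nonzero. I expect the main obstacle to be the verification of the partial trace property for this candidate: one must show that for $f\in\End_{\mathcal{C}}(P\otimes H_\beta)$ the identity $\mtr_{P\otimes H_\beta}(f)=\mtr_P\bigl((\Id_P\otimes\qtr^r_{H_\beta})(f)\bigr)$ holds, and this is precisely the step that consumes the $G$-integral equation \eqref{right int} together with the behaviour of $S$ and $g$ under $\Delta_{\alpha,\beta}$. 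Cyclicity is comparatively formal, following from the symmetry of $\widetilde\mu$ and the elementary identity $\mtr_{H_\alpha}(R_{h}R_{h'})=\mtr_{H_\alpha}(R_{h'}R_{h})$.

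Finally, the left-handed statement is obtained by the same argument with the symmetrised left $G$-integral \eqref{left int} in place of $\widetilde\mu$, using $g_\alpha^{-1}$ and the left partial trace. To compare the two families, I would express both $\widetilde\mu_\alpha$ and $\widetilde\mu^l_\alpha$ through the single integral made available by unimodularity of $H_1$ and the one-dimensionality of the integral spaces, relating them by the modular data carried by the pivot. The two families coincide exactly when $(H,g)$ is $G$-unibalanced, and since $\mtr\mapsto\lambda^{\mtr}$ is a bijection in both the right and the left case, this is equivalent to the right modified trace being simultaneously a left modified trace.
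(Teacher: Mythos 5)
Your outline follows the same overall strategy as the paper---pass back and forth between symmetric linear forms on the algebras $H_\alpha$ and cyclic traces via $h\mapsto R_h$, and match the partial trace property against the $G$-integral equation on the regular modules---but the two computations that carry the whole proof are only announced, not carried out, and one reduction you rely on is subtler than you acknowledge. The central missing ingredient is the module isomorphism $\phi_{\alpha,\beta}\colon H_{\alpha\beta}\otimes{}_\varepsilon H_\beta\to H_\alpha\otimes H_\beta$, $h\otimes m\mapsto h_{(1)}\otimes h_{(2)}m$, with inverse $x\otimes y\mapsto x_{(1)}\otimes S_{\beta^{-1}}(x_{(2)})y$, where ${}_\varepsilon H_\beta$ carries the trivial action. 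This is what turns ``the tensor product is governed by the coproduct'' into a usable statement: it transports an endomorphism of $H_\alpha\otimes H_\beta$ to one of $H_{\alpha\beta}\otimes{}_\varepsilon H_\beta$, where the partial trace over the trivially-acting factor is computed by an elementary cyclicity argument (Lemma \ref{lemma partial trace of cyclic trace}), and conversely it supplies the specific test endomorphism $\widetilde f=\Delta_{\alpha,\beta}\circ(R_h\otimes\varphi)\circ\psi_{\alpha,\beta}$ whose two trace evaluations yield exactly $\widetilde\nu_{\alpha\beta}(h)\varphi(1_\beta)=\widetilde\nu_\alpha(h_{(1)})\varphi(g_\beta h_{(2)})$, i.e.\ the defining relation \eqref{s right int}. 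Without exhibiting this isomorphism and this test map, ``I expect the right partial trace to reproduce the $G$-integral equation'' is a statement of intent, not a proof.

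Second, your reduction to free modules is incomplete. The right partial trace property must hold for all $f\in\End_{\mathcal{C}}(P\otimes W)$ with $P$ projective and $W$ an \emph{arbitrary} object of $\mathcal{C}$; writing $P$ as a summand of $H_\alpha^{\oplus n}$ only handles the left tensor factor. Reducing the arbitrary factor $W$ to the projective generator $H_\beta$ is precisely the content of the paper's Reduction Lemma \ref{hq chinh}: one sets $Q=P\otimes W$ (projective because $\textbf{Proj}(\mathcal{C})$ is a tensor ideal), builds morphisms $A\colon P\otimes P^{*}\to Q\otimes Q^{*}$ and $B\colon Q\otimes Q^{*}\to P\otimes P^{*}$ out of $f$ and the duality morphisms, and compares $\mtr^{1}_{P\otimes P^{*}}(B\circ A)$ with $\mtr^{1}_{Q\otimes Q^{*}}(A\circ B)$ using cyclicity and the already-established case of two projective factors. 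This step uses the pivotal structure essentially and is not covered by your summand argument. The remaining points---injectivity from the uniqueness of the cyclic extension of a symmetric linear form, and the unibalanced equivalence via the comodulus identity $a_\alpha=g_\alpha^{2}$ and the non-degeneracy of $\mu_\alpha$---are in line with the paper.
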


The paper contains five section. In section 2 we recall some definition and results for a Hopf $G$-coalgebra, we also define a pivotal Hopf $G$-coalgebra, a symmetrised $G$-integral for a pivotal Hopf $G$-coalgebra $H$ and prove that the symmetrised $G$-integrals are symmetric non-degenerate forms on $H$. Section 3 recall some results about modified traces and the proof of Reduction Lemma in the context of $G$-graded categories. In section 4 we present the decomposition of tensor product $H_{\alpha}\otimes H_{\beta}$ and the proof of the main theorem. In section 5 we give an application of the main theorem in the case associated to a quantization of the Lie algebra $\mathfrak{sl}(2)$.
\subsection*{Acknowledgments}
I would like to thank B. Patureau-Mirand, my thesis advisor, who helped me with this work, and who gave me the motivation to study mathematics. The author would also like to thank the professors and friends in the laboratory LMBA of the Universit\'e de Bretagne Sud and the Centre Henri Lebesgue ANR-11-LABX-0020-01 for creating an attractive mathematical environment.

\section{Pivotal Hopf $G$-coalgebra}\label{Section pivot}
In this section, we recall some facts about Hopf $G$-coalgebra. For details see \cite{Turaev10,Vire02}. We then define a pivotal Hopf $G$-coalgebra, a symmetrised $G$-integral and give some of its properties.

\subsection{Pivotal Hopf $G$-coalgebra}
\subsubsection{Hopf $G$-coalgebra}
\begin{Def}
Let $G$ be a multiplicative group. A $G$-coalgebra over a field  $\Bbbk$ is a family $C=\{C_{\alpha}\}_{\alpha \in G}$ of $\Bbbk$-spaces endowed with a family $\Delta=\{\Delta_{\alpha, \beta}:\ C_{\alpha\beta}\rightarrow C_{\alpha}\otimes C_{\beta}\}_{\alpha, \beta \in G}$ of $\Bbbk$-linear maps (the coproduct) and a $\Bbbk$-linear map $\varepsilon:\ C_1 \rightarrow \Bbbk$ (the counit) such that
\begin{itemize}
\item $\Delta$ is coassociative, i.e. for any $\alpha, \beta, \gamma \in G$, $$(\Delta_{\alpha, \beta}\otimes \Id_{C_{\gamma}})\Delta_{\alpha\beta, \gamma}=(\Id_{C_{\alpha}}\otimes \Delta_{\beta, \gamma})\Delta_{\alpha, \beta\gamma},$$
\item for all $\alpha \in G, \ (\Id_{C_{\alpha}}\otimes \varepsilon)\Delta_{\alpha,1}=\Id_{C_{\alpha}}=(\varepsilon \otimes \Id_{C_{\alpha}})\Delta_{1,\alpha}$.
\end{itemize}
A Hopf $G$-coalgebra is a $G$-coalgebra $H=(\{H_{\alpha}\}_{\alpha \in G}, \Delta, \varepsilon)$ endowed with a family $S=\{S_{\alpha}:\ H_{\alpha}\rightarrow H_{\alpha^{-1}}\}_{\alpha\in G}$ of $\Bbbk$-linear maps (the antipode) such that
\begin{itemize}
\item each $H_{\alpha}$ is an algebra with product $m_{\alpha}$ and unit element $1_{\alpha}\in H_{\alpha}$,
\item $\varepsilon:\ H_{1} \rightarrow \Bbbk$ and $\Delta_{\alpha, \beta}:\ H_{\alpha\beta}\rightarrow H_{\alpha}\otimes H_{\beta}$ are algebra homomorphisms for all $\alpha, \beta \in G$,
\item for any $\alpha \in G$
\begin{equation*}
m_{\alpha}(S_{\alpha^{-1}}\otimes \Id_{H_{\alpha}})\Delta_{\alpha^{-1}, \alpha}=\varepsilon 1_{\alpha}=m_{\alpha}(\Id_{H_{\alpha}}\otimes S_{\alpha^{-1}})\Delta_{\alpha, \alpha^{-1}}.
\end{equation*}
\end{itemize}
\end{Def}
The antipode automatically satisfies additional property:
\begin{Lem} \label{tc antip}
Given a Hopf $G$-coalgebra $H=(\{H_{\alpha}\}_{\alpha \in G}, \Delta, \varepsilon, S)$, then
\begin{enumerate}
\item $S_{\alpha}(xy)=S_{\alpha}(y)S_{\alpha}(x)$ for any $x, y \in H_{\alpha}$,
\item $S_{\alpha}(1_{\alpha})=1_{\alpha^{-1}}$,
\item $\Delta_{\beta^{-1}, \alpha^{-1}}S_{\alpha\beta}=\tau(S_{\alpha}\otimes S_{\beta})\Delta_{\alpha, \beta}$ where $\tau:\ H_{\alpha^{-1}} \otimes  H_{\beta^{-1}} \rightarrow H_{\beta^{-1}} \otimes H_{\alpha^{-1}}$ is the flip switching the two factors of $H_{\alpha^{-1}} \times  H_{\beta^{-1}}$,
\item $\varepsilon S_{1}=\varepsilon$.
\end{enumerate}
\end{Lem}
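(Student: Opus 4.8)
The plan is to reduce everything to the antipode axioms and to the fact that, in a suitable graded convolution algebra, the antipode is a two-sided inverse of the identity, imitating the classical Hopf-algebra arguments while carefully tracking the $G$-grading. I would dispatch (2) and (4) first, as they are immediate. For (2), apply $m_\alpha(S_{\alpha^{-1}}\otimes\Id_{H_\alpha})\Delta_{\alpha^{-1},\alpha}=\varepsilon 1_\alpha$ to $1_1\in H_1$; since $\Delta_{\alpha^{-1},\alpha}$ is an algebra map it sends $1_1$ to $1_{\alpha^{-1}}\otimes 1_\alpha$ and $\varepsilon(1_1)=1$, so $S_{\alpha^{-1}}(1_{\alpha^{-1}})1_\alpha=1_\alpha$, giving $S_{\alpha^{-1}}(1_{\alpha^{-1}})=1_\alpha$; relabelling $\alpha\mapsto\alpha^{-1}$ yields $S_\alpha(1_\alpha)=1_{\alpha^{-1}}$. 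For (4), apply $\varepsilon$ to the axiom with $\alpha=1$ and use that $\varepsilon$ is an algebra map together with the counit identity $\sum x_{(1)}\varepsilon(x_{(2)})=x$ on $H_1$ to obtain $\varepsilon(S_1(x))=\varepsilon(x)$.

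The substantive parts are (1) and (3), and here the key obstruction is exactly the grading: since $S_\alpha$ sends $H_\alpha$ to $H_{\alpha^{-1}}$, the naive convolution algebra $\Hom(H_\alpha^{\otimes 2},H_\alpha)$ used classically does not even make sense ($H_\alpha$ carries no coproduct to itself, and the two candidate inverses $S_\alpha m_\alpha$ and $m_{\alpha^{-1}}\tau(S_\alpha\otimes S_\alpha)$ land in $H_{\alpha^{-1}}$, not $H_\alpha$). My remedy is a \emph{groupoid-graded} convolution in which the source gradings multiply while the target algebra stays fixed: for a target algebra $A$ and maps $f\colon H_\gamma\to A$, $g\colon H_\delta\to A$ I set $(f\star g)\colon H_{\gamma\delta}\to A$, $(f\star g)(z)=m_A(f\otimes g)\Delta_{\gamma,\delta}(z)$, with the two-variable analogue obtained by splitting each slot via $\Delta_{\gamma,\delta}$. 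Coassociativity of the $G$-coproduct and associativity of $A$ make $\star$ associative, the map $u_A\varepsilon$ supported on $H_1$ is a unit, and the antipode axioms say precisely that $S_{\alpha^{-1}}$ is the two-sided $\star$-inverse of $\Id_{H_\alpha}$. In such a structure left and right inverses of a homogeneous element coincide, $P=P\star u=P\star(h\star N)=(P\star h)\star N=u\star N=N$, and this uniqueness is the engine of both proofs.

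For (3) I would take $A=H_{\beta^{-1}}\otimes H_{\alpha^{-1}}$ and compare the two maps against $D:=\Delta_{\beta^{-1},\alpha^{-1}}\colon H_{(\alpha\beta)^{-1}}\to A$. A short computation shows $(\Delta_{\beta^{-1},\alpha^{-1}}S_{\alpha\beta})\star D=u_A\varepsilon$: writing $\sum w'\otimes w''=\Delta_{\alpha\beta,(\alpha\beta)^{-1}}(w)$ and using that $\Delta_{\beta^{-1},\alpha^{-1}}$ is an algebra map, the left side becomes $\Delta_{\beta^{-1},\alpha^{-1}}\big(\sum S_{\alpha\beta}(w')w''\big)$, and the antipode axiom with $\gamma=(\alpha\beta)^{-1}$ collapses the inner sum to $\varepsilon(w)1_{(\alpha\beta)^{-1}}$. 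The symmetric computation, using coassociativity to split a $4$-fold coproduct and then the antipode axiom on the inner factors, gives $D\star(\tau(S_\alpha\otimes S_\beta)\Delta_{\alpha,\beta})=u_A\varepsilon$; hence the two maps are equal. For (1) I would run the two-variable version of $\star$ with $A=H_\alpha$, inverting $m_\alpha$: the map $N(a\otimes b)=S_{\alpha^{-1}}(ab)$ is a right $\star$-inverse (via comultiplicativity of $\Delta_{\alpha,\alpha^{-1}}$ and the axiom $m_\alpha(\Id_{H_\alpha}\otimes S_{\alpha^{-1}})\Delta_{\alpha,\alpha^{-1}}=\varepsilon 1_\alpha$), while $P(a\otimes b)=S_{\alpha^{-1}}(b)S_{\alpha^{-1}}(a)$ is a left $\star$-inverse (here the adjacent middle factor $S_{\alpha^{-1}}(w_{(\alpha^{-1})})w_{(\alpha)}$ collapses first by the other axiom, then the remaining outer factor collapses), so $N=P$, i.e. $S_{\alpha^{-1}}(ab)=S_{\alpha^{-1}}(b)S_{\alpha^{-1}}(a)$; relabelling gives (1). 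The only real obstacle is the bookkeeping of gradings, so that every convolution and every application of the antipode axiom lands in the correct component; once the graded convolution is in place the remaining verifications are routine.
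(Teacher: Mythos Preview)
The paper does not actually prove this lemma: it is stated without proof as a known property of the antipode, implicitly deferring to the foundational reference \cite{Vire02} on Hopf $G$-coalgebras. So there is no proof in the paper to compare against.

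Your proposal is correct and is exactly the natural argument --- the classical convolution proof for ordinary Hopf algebras, carefully regraded so that every map lands in the right component $H_\gamma$. The groupoid-graded convolution you set up (source degrees multiply while the target algebra $A$ stays fixed) is precisely the right device to make the uniqueness-of-inverses trick $P=P\star e=P\star(h\star N)=(P\star h)\star N=e\star N=N$ go through, and your verifications for (1)--(4) are sound. In (3), you correctly show that $\Delta_{\beta^{-1},\alpha^{-1}}S_{\alpha\beta}$ is a left $\star$-inverse of $D=\Delta_{\beta^{-1},\alpha^{-1}}$ and that $\tau(S_\alpha\otimes S_\beta)\Delta_{\alpha,\beta}$ is a right $\star$-inverse; in (1), the two-variable convolution with $A=H_\alpha$ does exactly what you claim, with $m_\alpha\star N$ collapsing via multiplicativity of $\Delta_{\alpha,\alpha^{-1}}$ and $P\star m_\alpha$ collapsing first on the inner pair $S_{\alpha^{-1}}(x_{(1)})x_{(2)}$ and then on the outer one. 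The only delicate point is the degree bookkeeping you yourself flag, and you handle it correctly.
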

\subsubsection{Graphical axioms for Hopf $G$-coalgebras}\label{the graphical representation}
We will use the diagrams for the structural maps and the identities corresponding to the Hopf $G$-coalgebra $H=(H_{\alpha})_{\alpha\in G}$. For simplicity we write $\alpha$ instead of $H_{\alpha}$ in the diagrams. Figure \ref{Fig cac toan tu cau truc} presents the structural maps of the Hopf $G$-coalgebra which are the product, coproduct, unity, counity and the antipode, respectively.
\begin{figure}
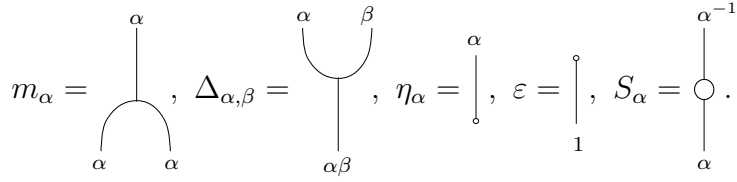
 
$$m_{\alpha}= \epsh{fig41}{9ex}
      \put(-2,-27){\ms{\alpha}}  \put(-29,-27){\ms{\alpha}}
			\put(-15,28){\ms{\alpha}}, \ 
		\Delta_{\alpha, \beta}=\epsh{fig42}{9ex} 
			\put(-4,29){\ms{\beta}}  \put(-28,29){\ms{\alpha}}
			\put(-18,-28){\ms{\alpha\beta}}, \ 
		\eta_{\alpha}=\epsh{fig43}{5ex}\put(-4,20){\ms{\alpha}}\ , 	\ 
		\varepsilon=\epsh{fig44}{5ex}\put(-2,-20){\ms{1}}\ ,  \ 
		S_{\alpha} =\epsh{fig45}{9ex}\put(-6, -27){\ms{\alpha}}\put(-6, 29){\ms{\alpha^{-1}}}\ .$$
\caption{The structural maps}\label{Fig cac toan tu cau truc}
\end{figure}
Note that these maps are in the category $\Vect_{\Bbbk}$ of finite dimensional vector spaces over a field $\Bbbk$.

The identity of the coassociativity and the algebra homomorphism $\Delta_{\alpha,\beta}$ are defined as in Figure \ref{Fig coassociativity and algebra homomorphism}.  
\begin{figure}
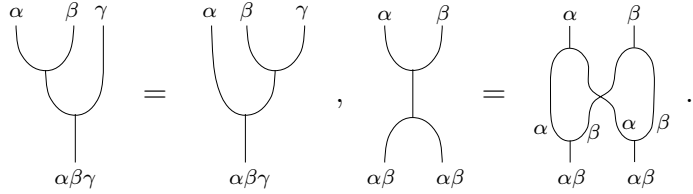

$$\epsh{fig46}{10ex}\put(-18,-31){\ms{\alpha\beta\gamma}} \put(-3,33){\ms{\gamma}}\put(-35,32){\ms{\alpha}}\put(-14,32){\ms{\beta}} \quad =\quad\epsh{fig47}{10ex}\put(-28,-31){\ms{\alpha\beta\gamma}}
\put(-38,32){\ms{\alpha}} \put(-24,32){\ms{\beta}}\put(-3,33){\ms{\gamma}} \quad ,
 \quad \epsh{fig48}{10ex} \put(-28,-31){\ms{\alpha\beta}} \put(-2,-31){\ms{\alpha\beta}}
 \put(-24,31){\ms{\alpha}} \put(-2,31){\ms{\beta}}
 \quad=\quad\epsh{fig49}{10ex}  
 \put(-36,-31){\ms{\alpha\beta}} \put(-10,-31){\ms{\alpha\beta}}
 \put(-34,31){\ms{\alpha}} \put(-10,31){\ms{\beta}} \put(-45,-12){\ms{\alpha}} \put(-25,-14){\ms{\beta}} \put(-12,-10){\ms{\alpha}} \put(1,-10){\ms{\beta}}  \quad .$$
\caption{The coassociativity and algebra homomorphism $\Delta_{\alpha,\beta}$}\label{Fig coassociativity and algebra homomorphism}
\end{figure}
The antipode properties are shown in Figure \ref{Fig antipode axioms}.
\begin{figure}
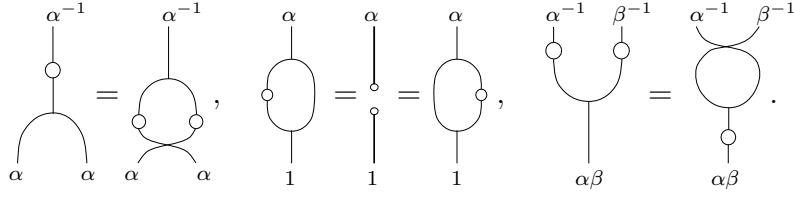

$$\epsh{fig416}{10ex}\put(-29,-29){\ms{\alpha}} \put(-2,-29){\ms{\alpha}} \put(-15,31){\ms{\alpha^{-1}}}
=\epsh{fig417}{10ex} \put(-29,-29){\ms{\alpha}} \put(-2,-29){\ms{\alpha}} \put(-15,31){\ms{\alpha^{-1}}} \ , \quad 
\epsh{fig420}{10ex}\put(-12,31){\ms{\alpha}} \put(-11,-31){\ms{1}}\
 =\epsh{fig422}{10ex}\put(-5,31){\ms{\alpha}} \put(-4,-31){\ms{1}}\ 
 =\epsh{fig421}{10ex} \put(-14,31){\ms{\alpha}} \put(-13,-31){\ms{1}}\ , \quad
\epsh{fig418}{10ex} \put(-31,31){\ms{\alpha^{-1}}} \put(-6,31){\ms{\beta^{-1}}} 
			\put(-20,-31){\ms{\alpha\beta}}
\ =\ \epsh{fig419}{10ex} \put(-18,-31){\ms{\alpha\beta}} \put(-26,31){\ms{\alpha^{-1}}} \put(-1,31){\ms{\beta^{-1}}}\ .$$
\caption{The antipode properties}\label{Fig antipode axioms}
\end{figure}
\begin{figure}
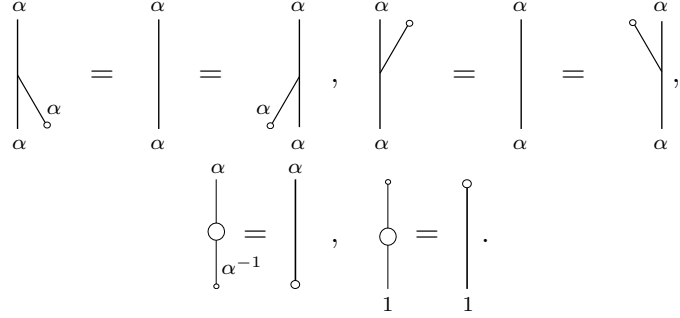

$$\epsh{fig410}{8ex}\put(-14,27){\ms{\alpha}} \put(-14,-25){\ms{\alpha}} \put(-1,-12){\ms{\alpha}}\quad=\quad\epsh{fig412}{8ex}\put(-3,27){\ms{\alpha}} \put(-3,-25){\ms{\alpha}}\quad=\quad\epsh{fig414}{8ex}\put(-3,27){\ms{\alpha}} \put(-3,-25){\ms{\alpha}} \put(-16,-12){\ms{\alpha}}\quad , \quad \epsh{fig411}{8ex}\put(-14,27){\ms{\alpha}} \put(-14,-25){\ms{\alpha}} \quad=\quad\epsh{fig412}{8ex}\put(-3,27){\ms{\alpha}} \put(-3,-25){\ms{\alpha}}\quad=\quad\epsh{fig413}{8ex}\put(-3,27){\ms{\alpha}} \put(-3,-25){\ms{\alpha}}\ ,$$
$$\epsh{fig423}{8ex}\ =\ \epsh{fig43}{8ex} \put(-29,-12){\ms{\alpha^{-1}}} \put(-33,26){\ms{\alpha}} \put(-4,26){\ms{\alpha}}\quad , \quad \epsh{fig424}{8ex} \put(-5, -26){\ms{1}}\ =\  \epsh{fig44}{8ex} \put(-3, -26){\ms{1}}\ .$$
\caption{The unit and counity}\label{Fig counity axioms}
\end{figure}
Finally, the compatibility between the antipode and the unity, counity are illustrated in Figure \ref{Fig counity axioms}.


\begin{Exem}\label{example 1} Let $H$ be a possibly infinite dimensional pivotal Hopf algebra with the pivot $\phi$. Suppose there is a commutative Hopf subalgebra $C$ contained in the center of $H$ (for example $H$ can be the unrestrited quantum group in \cite{KaDe92}; an other example will be detailed in Section 6). Let $G=\Hom_{Alg}(C, \Bbbk)$ be the group of characters on $C$ with multiplication given by $gh= (g\otimes h)\circ \Delta$ for $g, h \in G$ and $g^{-1}=g\circ S|_{C}$. For $g\in G$ we define $H_{g}=H\otimes_{g: C\rightarrow \Bbbk}\Bbbk = H/I_{g}$ where $I_{g}$ is the ideal generated by elements $z-g(z)$ for $z\in C$. Assume  $g=g_1 g_2$ for $g_1, g_2 \in G$, then
\begin{align*}
\Delta(z-g(z))&=\Delta(z)-(g_1\otimes g_2)(\Delta(z))\\
&=z_{(1)}\otimes z_{(2)}-g_1(z_{(1)})\otimes g_2(z_{(2)})\\
&=\left(z_{(1)}-g_1(z_{(1)})\right) \otimes z_{(2)} + g_1(z_{(1)})\otimes \left(z_{(2)}-g_2(z_{(2)})\right)
\end{align*}
where we used the Sweedler's notation $\Delta(z)=z_{(1)}\otimes z_{(2)}$.
This implies that $\Delta(I_{g})\subset I_{g_1}\otimes H+H\otimes I_{g_2}$. We thus have that a well defined coproduct $\Delta_{g_1, g_2}$ given by the commutative diagram below
\begin{diagram}
H &\rTo^{\Delta} &H \otimes H\\
\dTo_{p_{g_1 g_2}} & &\dTo_{p_{g_1}\otimes p_{g_2}}\\
H_{g_1 g_2} &\rTo^{\Delta_{g_1, g_2}} &H_{g_1} \otimes H_{g_2}
\end{diagram}
where $p_g:\ H \rightarrow H_g$ is the projective morphism. The family $\{H_g\}_{g\in G}$ with coproduct $\Delta_{g,h}$ is a $G$-coalgebra. It is also a Hopf $G$-coalgebra with the family of antipode given by the commutative diagram
\begin{diagram}
H &\rTo^{S} &H \\
\dTo_{p_{g}} & &\dTo_{p_{g^{-1}}}\\
H_{g} &\rTo^{S_{g}} &H_{g^{-1}} .
\end{diagram}
The family $S_g$ for $g\in G$ is well defined since $S(z-g(z))=S(z)-g(z)=S(z)-g^{-1}(S(z)) \in I_{g^{-1}}$. 
\end{Exem}
We say a Hopf $G$-coalgebra $H$ is of {\em finite type} if $H_{\alpha}$ is finite dimensional over $\Bbbk$ for all $\alpha\in G$. 
\subsubsection{Pivotal structure}
We recall that a $G$-grouplike element of a Hopf $G$-coalgebra $H$ is a family $g=(g_{\alpha})_{\alpha \in G}\in \prod_{\alpha}H_{\alpha}$ such that $\Delta_{\alpha, \beta}(g_{\alpha \beta})=g_{\alpha} \otimes g_{\beta}$ for any $\alpha, \beta \in G$ and $\varepsilon(g_1)=1_{\Bbbk}$. Note that $g_1$ is a grouplike element of the Hopf algebra $H_1$. It follows \cite{Vire02} that the set of the $G$-grouplike elements of $H$ is a group and if $g=(g_{\alpha})_{\alpha \in G}$, then $g^{-1}=(S_{\alpha^{-1}}(g_{\alpha^{-1}}))_{\alpha \in G}$.
\begin{Def}
A $G$-grouplike element $g\in H$ is called a pivot if 
$S_{\alpha^{-1}}S_{\alpha}(x)=g_{\alpha}xg_{\alpha}^{-1}$ for all $x \in H_{\alpha}$.
The pair $(H, g)$ of a Hopf $G$-coalgebra $H$ and a pivot $g$ is called a pivotal Hopf $G$-coalgebra.
\end{Def}
Remark that for a pivotal Hopf $G$-coalgebra $H=(\{H_{\alpha}\}_{\alpha \in G}, \Delta, \varepsilon, S, g)$, $H_1$ is a pivotal Hopf algebra.
\begin{Exem}
Let $H$ be a Hopf $G$-coalgebra as in Example \ref{example 1}. Let $\phi_{g}$ be the image of $\phi$ in the quotient $H_g$. Then $H$ is a pivotal Hopf $G$-coalgebra.
\end{Exem}
\subsection{Symmetrised right and left $G$-integrals}
Let $H=(\{H_{\alpha}\}_{\alpha \in G}, \Delta, \varepsilon, S)$ be a finite type pivotal Hopf $G$-coalgebra with right $G$-integral $\mu$. The symmetrised right $G$-integral associated with $\mu$ is a family $\widetilde{\mu}=(\widetilde{\mu}_{\alpha})_{\alpha\in G}\in \prod_{\alpha\in G}H_{\alpha}^*$ defined by
$\widetilde{\mu}_{\alpha}(x):=\mu_{\alpha}(g_{\alpha}x)$ for any $x\in H_{\alpha}$.\\
Using the definition of the right $G$-integral, see Equation \eqref{right int} and replacing $x\in H_{\alpha\beta}$ by $g_{\alpha\beta}x $ we get:
\begin{equation}\label{s right int}
(\widetilde{\mu}_{\alpha} \otimes g_{\beta})\Delta_{\alpha, \beta}(x)= \widetilde{\mu}_{\alpha\beta}(x)1_{\beta}.
\end{equation}
Similarly, the symmetrised left $G$-integral is defined by $\widetilde{\mu}_{\alpha}^{l}(x):=\mu_{\alpha}^{l}(g_{\alpha}^{-1}x)$ for any $x \in H_{\alpha}$. Applying (\ref{left int}) for $g_{\alpha\beta}^{-1}x, \ x\in H_{\alpha\beta}$ we get the defining relation for the symmetrised left $G$-integral:
\begin{equation}\label{s left int}
(g_{\alpha}^{-1} \otimes \widetilde{\mu}_{\beta}^{l})\Delta_{\alpha, \beta}(x)= \widetilde{\mu}_{\alpha\beta}^{l}(x)1_{\alpha}.
\end{equation}
The graphical representation for Equality \eqref{s right int} is given in Figure \ref{presentation of right sym G-int}.
\begin{figure}
$$\epsh{fig434}{14ex}\put(-31,20){\ms{\widetilde{\mu}_{\alpha}}} \put(-21,-42){\ms{\alpha\beta}} \put(-16,-2){\ms{g_{\beta}}} \put(-2,42){\ms{\beta}}\ 
=\ \epsh{fig435}{14ex}\put(-19, -6){\ms{\widetilde{\mu}_{\alpha\beta}}} \put(-17,-42){\ms{\alpha\beta}} \put(-15,42){\ms{\beta}}\ .$$
\caption{The graphical representation of the relation for the right symmetrised $G$-integral}\label{presentation of right sym G-int}
\end{figure} 
The graphical representation of the relation for the left symmetrised $G$-integral is similar.

Since the pivot is invertible Equation \eqref{s right int} for $\widetilde{\mu}$ is equivalent to Equation \eqref{right int} for $\mu$. As the space of right $G$-integrals is one-dimensional, relation \eqref{s right int} defines $\widetilde{\mu}$ uniquely (up to a scalar). Similarly the symmetrised left $G$-integral $\widetilde{\mu}^{l}$ defined by \eqref{s left int} is unique. Note also that the symmetrised $G$-integral for $H_1$ is the one in the sense of \cite{BBG17}.

A Hopf $G$-coalgebra $H$ is {\em unimodular} if the Hopf algebra $H_1$ is unimodular, this means that the spaces of left and right cointegrals for $H_1$ coincide.

We say that a family of linear forms $\varphi_{\alpha}\in H_{\alpha}^{*}$ for $\alpha\in G$ is {\em symmetric} non-degenerate if for any $\alpha\in G$ the associated bilinear forms $(x,y) \mapsto \varphi_{\alpha}(xy),\ x, y \in H_{\alpha}$ is. 
\begin{Pro}\label{sr int sym}
Assure $(H, g)$ is unimodular, then the symmetrised right (resp. left) $G$-integral for $(H, g)$ is symmetric and non-degenerate.
\end{Pro}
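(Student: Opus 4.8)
The plan is to reduce the $G$-graded statement to the known case of the Hopf algebra $H_1$ treated in \cite{BBG17}, and then to handle the ``off-diagonal'' components $\alpha\neq 1$ by transporting the argument through the coproduct. The symmetry and non-degeneracy are properties of each bilinear form $(x,y)\mapsto\widetilde{\mu}_\alpha(xy)$ on $H_\alpha$ \emph{individually}, so I would fix $\alpha\in G$ throughout and prove both assertions for that fixed index; the family statement then follows by quantifying over $\alpha$. I focus on the right $G$-integral, the left case being strictly parallel via \eqref{s left int} with $g_\alpha^{-1}$ in place of $g_\alpha$.

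\emph{Symmetry.} First I would establish $\widetilde{\mu}_\alpha(xy)=\widetilde{\mu}_\alpha(yx)$ for all $x,y\in H_\alpha$. Unwinding the definition this reads $\mu_\alpha(g_\alpha xy)=\mu_\alpha(g_\alpha yx)$. The key input is unimodularity of $H_1$, which by the classical theory (see \cite{Ra12}) forces the distinguished grouplike element of $H_1$ to be trivial, and this is exactly what makes the ordinary symmetrised integral on $H_1$ symmetric in \cite{BBG17}. To promote this to general $\alpha$, I would use the defining relation \eqref{s right int} together with the grouplike compatibility $\Delta_{\alpha,\alpha^{-1}}(g_1)=g_\alpha\otimes g_{\alpha^{-1}}$ (a special case of the pivot axiom) and the antipode intertwining identity from Lemma \ref{tc antip}(3). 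Concretely, I would compute $\widetilde{\mu}_\alpha(xy)$ by inserting a coevaluation/evaluation pair that carries an $H_\alpha$-strand over to an $H_{\alpha^{-1}}$-strand via $S_\alpha$, apply the right-integral invariance \eqref{right int} to ``absorb'' one factor, and then reverse the manipulation after swapping. The graphical calculus of Figures \ref{Fig antipode axioms} and \ref{presentation of right sym G-int} makes this a sequence of isotopies plus one application of \eqref{s right int}; the unimodularity hypothesis is what guarantees that the distinguished grouplike element produced by sliding the integral past the antipode is $1_1$, so no extra grouplike obstruction appears and the two sides coincide.

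\emph{Non-degeneracy.} For fixed $\alpha$ I would show that the pairing $(x,y)\mapsto\widetilde{\mu}_\alpha(xy)$ has trivial left radical, i.e.\ if $\widetilde{\mu}_\alpha(xy)=0$ for all $y\in H_\alpha$ then $x=0$; finite-dimensionality of $H_\alpha$ then yields non-degeneracy on both sides, and symmetry makes the two radicals agree. Since $g_\alpha$ is invertible, $\widetilde{\mu}_\alpha(xy)=\mu_\alpha(g_\alpha xy)$, so the radical of $\widetilde{\mu}_\alpha$ equals the radical of the pairing $(u,y)\mapsto\mu_\alpha(uy)$ up to the linear isomorphism $u\mapsto g_\alpha^{-1}u$; hence it suffices to prove $\mu_\alpha$ gives a non-degenerate pairing $(u,y)\mapsto\mu_\alpha(uy)$ on $H_\alpha$. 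This is the $G$-analogue of the classical fact that a nonzero right integral on a finite-dimensional Hopf algebra is a Frobenius functional. I would derive it from the invariance relation \eqref{right int}: using the antipode axioms to build, for each $0\neq x\in H_\alpha$, an element $y$ with $\mu_\alpha(xy)\neq 0$, exactly as in the standard Frobenius-form argument for $H_1$ (where $\mu_1\neq 0$ is non-degenerate), and then propagating along the grading using that $\Delta_{\alpha,\alpha^{-1}}$ and $S_\alpha$ are algebra (anti)homomorphisms so that the $H_1$-level non-degeneracy transports to $H_\alpha$.

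\emph{Main obstacle.} I expect the genuinely delicate step to be the symmetry argument for $\alpha\neq 1$, specifically pinning down that unimodularity of $H_1$ alone (rather than some stronger ``$G$-unimodularity'') suffices to kill the distinguished grouplike element that arises when the right integral is slid past $S_{\alpha^{-1}}S_\alpha$. In the ungraded case this is the statement that the comodulus (distinguished grouplike) is $1$; here I must verify that the comodulus lives in $H_1$ and that the relevant identity $S_{\alpha^{-1}}S_\alpha(x)=g_\alpha x g_\alpha^{-1}$ from the pivot axiom interacts correctly with it. The non-degeneracy, by contrast, should reduce cleanly to the $H_1$ case and the invertibility of the pivot, and the left-integral statements follow by the evident symmetry between \eqref{s right int} and \eqref{s left int}.
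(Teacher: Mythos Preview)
Your outline has the right ingredients but the actual mechanism in each half is left vague, and the paper's proof takes a much more direct route by invoking two results already established in Virelizier \cite{Vire02}.

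\emph{Symmetry.} The paper does not ``transport through the coproduct'' at all. It simply cites \cite[Lemma~7.1]{Vire02}, which says that under unimodularity one has the Radford-type identity
\[
\mu_\alpha(ab)=\mu_\alpha\bigl(S_{\alpha^{-1}}S_\alpha(b)\,a\bigr)\qquad\text{for all }a,b\in H_\alpha.
\]
Combining this directly with the pivot axiom $S_{\alpha^{-1}}S_\alpha(y)=g_\alpha y g_\alpha^{-1}$ gives
\[
\widetilde{\mu}_\alpha(xy)=\mu_\alpha(g_\alpha xy)=\mu_\alpha\bigl(S_{\alpha^{-1}}S_\alpha(y)\,g_\alpha x\bigr)=\mu_\alpha(g_\alpha y x)=\widetilde{\mu}_\alpha(yx),
\]
in one line. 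Your description of inserting coevaluation/evaluation pairs, moving strands via $S_\alpha$, and applying \eqref{right int} is essentially an attempt to \emph{reprove} Virelizier's Lemma~7.1 from scratch, and you correctly flag this as the delicate point. That argument can be made, but it is not the short manipulation you suggest; it is a graded version of Radford's trace formula and requires keeping track of the distinguished grouplike (which lives in $\prod_\alpha H_\alpha$, not just in $H_1$).

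\emph{Non-degeneracy.} Here your ``propagate from $H_1$ to $H_\alpha$ through $\Delta_{\alpha,\alpha^{-1}}$ and $S_\alpha$'' is a genuine gap: there is no evident mechanism by which non-degeneracy of $\mu_1$ on $H_1$ forces non-degeneracy of $\mu_\alpha$ on $H_\alpha$ for $\alpha\neq 1$. The paper instead cites \cite[Corollary~3.7]{Vire02}: each $H_\alpha^{*}$ is a free left $H_\alpha$-module of rank one with basis $\{\mu_\alpha\}$ under the action $(h\rightharpoonup\mu_\alpha)(x)=\mu_\alpha(xh)$. Then $\widetilde{\mu}_\alpha(xy)=0$ for all $x$ means $g_\alpha y\rightharpoonup\mu_\alpha=0$, hence $g_\alpha y=0$ by freeness, hence $y=0$. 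This Frobenius property of each $\mu_\alpha$ is proved in \cite{Vire02} by a direct $G$-coalgebra argument (not by reduction to degree $1$), and that is what you are missing. Your observation that invertibility of $g_\alpha$ reduces non-degeneracy of $\widetilde{\mu}_\alpha$ to that of $\mu_\alpha$ is correct, but the latter is the substantive input.
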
 
\begin{proof}
For any $\alpha \in G,\ x, y \in H_{\alpha}$, by \cite[Lemma 7.1]{Vire02} we have $$\widetilde{\mu}_{\alpha}(xy)=\mu_{\alpha}(g_{\alpha}xy)=\mu_{\alpha}(S_{\alpha^{-1}}S_{\alpha}(y)g_{\alpha}x)=\mu_{\alpha}(g_{\alpha}yx)=\widetilde{\mu}_{\alpha}(yx)$$
and by \cite[Corollary 3.7]{Vire02} $H_{\alpha}^*$ is free left module rank one over $H_\alpha$ with basis $\{\mu_{\alpha}\}$ when the action is defined by
\begin{equation*}
(h\rightharpoonup \mu_{\alpha})(x):=\mu_{\alpha}(xh) \ \text{for} \ h, x \in H_{\alpha}.
\end{equation*}
If $\widetilde{\mu}_{\alpha}(xy)=\mu_{\alpha}(g_{\alpha}yx)=(g_{\alpha}y\rightharpoonup \mu_{\alpha})(x)=0$ for all $x\in H_{\alpha}$, then $g_{\alpha}y\rightharpoonup \mu_{\alpha}=0$. It follows thus $y=0$.\\
For the symmetrised left $G$-integral the proof is similar.
\end{proof}
 
Also note that the spaces of left and right $G$-integrals are not equal in general. We have a lemma.
\begin{Lem}\label{choice of left int}
The left $G$-integral for $H$ can be chosen as $\mu_{\alpha}^{l}(x)=\mu_{\alpha^{-1}}(S_{\alpha}(x))$ for any $x\in H_{\alpha}$.
\end{Lem}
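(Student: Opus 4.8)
The plan is to verify directly that the family $\mu^{l}=(\mu^{l}_{\alpha})_{\alpha\in G}$ defined by $\mu^{l}_{\alpha}=\mu_{\alpha^{-1}}\circ S_{\alpha}$ satisfies the defining relation of a left $G$-integral, namely $(\Id_{H_{\alpha}}\otimes\mu^{l}_{\beta})\Delta_{\alpha,\beta}(x)=\mu^{l}_{\alpha\beta}(x)\,1_{\alpha}$ for $x\in H_{\alpha\beta}$. The guiding idea is that applying the antipode converts the right-integral relation \eqref{right int} into the left one; the only subtlety is the reversal of tensor factors forced by the flip $\tau$ in the antipode--coproduct compatibility, which is precisely what interchanges the roles of the two slots.

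First I would fix $\alpha,\beta\in G$ and $x\in H_{\alpha\beta}$, and apply the right $G$-integral relation \eqref{right int} to the element $S_{\alpha\beta}(x)\in H_{(\alpha\beta)^{-1}}=H_{\beta^{-1}\alpha^{-1}}$, split according to $(\alpha\beta)^{-1}=\beta^{-1}\cdot\alpha^{-1}$. This yields
$$(\mu_{\beta^{-1}}\otimes\Id_{H_{\alpha^{-1}}})\Delta_{\beta^{-1},\alpha^{-1}}(S_{\alpha\beta}(x))=\mu_{\beta^{-1}\alpha^{-1}}(S_{\alpha\beta}(x))\,1_{\alpha^{-1}}=\mu^{l}_{\alpha\beta}(x)\,1_{\alpha^{-1}}.$$
Next I would rewrite the left-hand side using Lemma \ref{tc antip}(3), $\Delta_{\beta^{-1},\alpha^{-1}}S_{\alpha\beta}=\tau(S_{\alpha}\otimes S_{\beta})\Delta_{\alpha,\beta}$. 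Writing $\Delta_{\alpha,\beta}(x)=x'\otimes x''$ with $x'\in H_{\alpha}$ and $x''\in H_{\beta}$, this gives $\Delta_{\beta^{-1},\alpha^{-1}}(S_{\alpha\beta}(x))=S_{\beta}(x'')\otimes S_{\alpha}(x')$, so the displayed identity becomes
$$\mu_{\beta^{-1}}(S_{\beta}(x''))\,S_{\alpha}(x')=\mu^{l}_{\alpha\beta}(x)\,1_{\alpha^{-1}}.$$
Recognizing $\mu_{\beta^{-1}}(S_{\beta}(x''))=\mu^{l}_{\beta}(x'')$ by definition turns this into $\mu^{l}_{\beta}(x'')\,S_{\alpha}(x')=\mu^{l}_{\alpha\beta}(x)\,1_{\alpha^{-1}}$, an identity living in $H_{\alpha^{-1}}$.

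Finally I would transport this identity back to $H_{\alpha}$ by applying $S_{\alpha^{-1}}$ to both sides and invoking the pivotal relation $S_{\alpha^{-1}}S_{\alpha}(x')=g_{\alpha}x'g_{\alpha}^{-1}$ together with $S_{\alpha^{-1}}(1_{\alpha^{-1}})=1_{\alpha}$ from Lemma \ref{tc antip}(2). This produces $\mu^{l}_{\beta}(x'')\,g_{\alpha}x'g_{\alpha}^{-1}=\mu^{l}_{\alpha\beta}(x)\,1_{\alpha}$, and conjugating by $g_{\alpha}^{-1}$ (multiplying on the left by $g_{\alpha}^{-1}$ and on the right by $g_{\alpha}$) removes the pivot, leaving $\mu^{l}_{\beta}(x'')\,x'=\mu^{l}_{\alpha\beta}(x)\,1_{\alpha}$, which is exactly $(\Id_{H_{\alpha}}\otimes\mu^{l}_{\beta})\Delta_{\alpha,\beta}(x)=\mu^{l}_{\alpha\beta}(x)\,1_{\alpha}$. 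I expect the step needing the most care to be the index bookkeeping around the flip $\tau$: it is this reversal that swaps the two tensor slots, so that ``summing out the first factor'' for the right integral becomes ``summing out the second factor'' for the left integral. The pivotal conjugation at the end is then only a cosmetic correction, needed because $S_{\alpha^{-1}}S_{\alpha}$ is inner rather than the identity.
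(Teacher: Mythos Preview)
Your proof is correct and follows essentially the same route as the paper: apply the right-integral relation to $S_{\alpha\beta}(x)$, use the antipode--coproduct compatibility of Lemma~\ref{tc antip}(3) to unwind the coproduct with a flip, and then transport the resulting identity from $H_{\alpha^{-1}}$ back to $H_{\alpha}$. The only difference is in that last transport step: the paper applies $S_{\beta}^{-1}$ directly (the antipode is bijective for finite type Hopf $G$-coalgebras, see \cite{Vire02}), whereas you apply $S_{\alpha^{-1}}$ and then invoke the pivotal relation $S_{\alpha^{-1}}S_{\alpha}=g_{\alpha}(\cdot)g_{\alpha}^{-1}$, which introduces a conjugation you immediately cancel. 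Your detour through the pivot is harmless here since the ambient hypothesis is pivotal, but note that the statement itself does not need the pivot---using $S_{\alpha}^{-1}$ instead would make the argument work for any finite type Hopf $G$-coalgebra.
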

\begin{proof}
By (\ref{right int}) we have 
\begin{equation*}
(\mu_{\alpha^{-1}}\otimes \Id_{H_{\beta^{-1}}})\Delta_{\alpha^{-1}, \beta^{-1}}(S_{\beta\alpha}(x))=\mu_{{(\beta\alpha})^{-1}}(S_{\beta\alpha}(x))1_{ \beta^{-1}}\ \text{for any}\ x\in H_{\beta\alpha}.
\end{equation*}
Using Lemma \ref{tc antip} (3) $\Delta_{\alpha^{-1}, \beta^{-1}}(S_{\beta\alpha}(x))=(S_{\alpha}\otimes S_{\beta})\Delta_{\beta, \alpha}^{op}(x)$ we get
$$(\mu_{\alpha^{-1}}\circ S_{\alpha}\otimes S_{\beta})\Delta_{\beta, \alpha}^{op}(x)=(S_{\beta} \otimes \mu_{\alpha^{-1}}\circ S_{\alpha})\Delta_{\beta, \alpha}(x)=\mu_{{(\beta\alpha})^{-1}}(S_{\beta\alpha}(x))1_{ \beta^{-1}}.$$
Applying $S_{\beta}^{-1}$ to both sides of the last equality and $S_{\beta}^{-1}(1_{ \beta^{-1}})=1_{ \beta}$, we obtain that $(\Id_{H_{\beta}}\otimes \mu_{\alpha^{-1}}\circ S_{\alpha})\Delta_{\beta, \alpha}(x)=(\mu_{{(\beta\alpha})^{-1}} \circ S_{\beta\alpha})(x)1_{\beta}$, i.e. $\mu_{{\alpha}^{-1}} \circ S_{\alpha}$ satisfies the definition of the left $G$-integral.
\end{proof}
\subsection{$G$-unibalanced Hopf algebras}
Let $H=(\{H_{\alpha}\}_{\alpha \in G}, \Delta, \varepsilon, S)$ be a finite type Hopf $G$-coalgebra with right $G$-integral $\mu$. We call a {\em distinguished $G$-grouplike} of $H$ (see e.g \cite{Vire02}) or {\em $G$-comodulus} of $H$ a $G$-grouplike element $\textbf{a}=(a_{\alpha})_{\alpha\in G}\in \prod_{\alpha\in G}H_{\alpha}$ satisfying 
\begin{equation}\label{G-comodulus}
(\Id_{H_{\alpha}}\otimes \mu_{\beta})\Delta_{\alpha, \beta}(x)=\mu_{\alpha\beta}(x)a_{\alpha} \ \text{for any}\ x \in H_{\alpha\beta}.
\end{equation} 
Note that $a_1$ is the comodulus element of the Hopf algebra  $H_1$ (see \cite{BBG17}). By multiplying \eqref{G-comodulus} with $\textbf{a}^{-1}$ and replacing $x$ by $a_{\alpha\beta}x$ we have
\begin{equation}\label{tc comodulus}
(\Id_{H_{\alpha}}\otimes \mu_{\beta}(a_{\beta}?))\Delta_{\alpha, \beta}(x)=\mu_{\alpha\beta}(a_{\alpha\beta}x)1_{\alpha}
\end{equation}
where denote by $\mu_{\beta}(a_{\beta}?)$ the linear map $x\mapsto \mu_{\beta}(a_{\beta}x)$ for $x\in H_{\beta}$.
This equality implies that $\mu_{\beta}(a_{\beta}?)$ is a left $G$-integral for $H$, i.e.
\begin{equation}\label{another choice left int}
\mu_{\beta}^{l}(x)=\mu_{\beta}(a_{\beta}x).
\end{equation}
This is another choice for left $G$-integral from right $G$-integral. This choice of the left $G$-integral is the same with the one in Lemma \ref{choice of left int} by following proposition.
\begin{Pro}
We have the relation $\mu_{\alpha^{-1}}(S_{\alpha}(x))=\mu_{\alpha}(a_{\alpha}x)$ for any $x\in H_{\alpha}$.
\end{Pro}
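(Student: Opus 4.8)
The plan is to exploit the one-dimensionality of the space of left $G$-integrals. Recall that two left $G$-integrals have already been produced out of the single right $G$-integral $\mu$: the family $\nu_{\alpha}:=\mu_{\alpha^{-1}}\circ S_{\alpha}$ coming from Lemma \ref{choice of left int}, and the family $\rho_{\alpha}:=\mu_{\alpha}(a_{\alpha}\,?)$ coming from Equation \eqref{another choice left int}. The statement to prove is precisely $\nu=\rho$, i.e. $\mu_{\alpha^{-1}}(S_{\alpha}(x))=\mu_{\alpha}(a_{\alpha}x)$ for all $\alpha$ and all $x\in H_{\alpha}$.

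First I would record that both families are nonzero elements of that space. Indeed, at the neutral component $S_1$ is a bijection and $a_1$ is an invertible grouplike, while $\mu_1\neq 0$ is an ordinary right integral of the finite-dimensional Hopf algebra $H_1$; hence $\nu_1=\mu_1\circ S_1\neq 0$ and $\rho_1=\mu_1(a_1\,?)\neq 0$. Since, as recalled in the Introduction following \cite{Vire02}, the space of left $G$-integrals of a finite type Hopf $G$-coalgebra is one-dimensional, there is a single scalar $c\in\Bbbk$ with $\nu_{\alpha}=c\,\rho_{\alpha}$ for all $\alpha\in G$ \emph{simultaneously}, that is $\mu_{\alpha^{-1}}(S_{\alpha}(x))=c\,\mu_{\alpha}(a_{\alpha}x)$.

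It then remains to pin down $c=1$, and this is the crux. Evaluating the proportionality on the neutral component $\alpha=1$ reduces the claim to the identity $\mu_1(S_1(x))=\mu_1(a_1 x)$ for $x\in H_1$, i.e. to the corresponding statement for the ordinary finite-dimensional Hopf algebra $H_1$ with right integral $\mu_1$ and distinguished grouplike (comodulus) $a_1$. This is the classical Radford-type relation between a right integral, the antipode, and the distinguished grouplike element, and I would invoke it from \cite{Ra12} (cf. also \cite{BBG17}). A self-contained verification would reproduce Radford's argument: writing $\mu_1\circ S_1$ as a left integral via the anti-coalgebra property of $S_1$ (Lemma \ref{tc antip}(3)) and comparing it with $\mu_1(a_1\,?)$ through the comodulus relation \eqref{tc comodulus} specialized to $\alpha=1$. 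Once $c=1$ is established on the neutral component, it holds for every $\alpha$ by the global proportionality, which finishes the proof.

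I expect the only genuine difficulty to be this determination of the constant: the proportionality itself is formal, but fixing the normalization is exactly the content of the classical base case, which is where the real Hopf-algebraic input enters. A possible alternative, avoiding the appeal to one-dimensionality, would be a direct component-wise manipulation combining the right $G$-integral relation \eqref{right int}, the antipode axioms of Figure \ref{Fig antipode axioms}, and the comodulus relation \eqref{tc comodulus} to transform $\mu_{\alpha^{-1}}(S_{\alpha}(x))$ into $\mu_{\alpha}(a_{\alpha}x)$ graphically; but this amounts to importing Radford's computation into the graphical calculus and is not obviously shorter.
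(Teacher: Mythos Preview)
Your proposal is correct and shares the paper's key insight: both recognize the two families $\nu_\alpha=\mu_{\alpha^{-1}}\circ S_\alpha$ and $\rho_\alpha=\mu_\alpha(a_\alpha\,?)$ as left $G$-integrals and reduce the comparison to the classical identity $\mu_1\circ S_1=\mu_1(a_1\,?)$ on $H_1$, which is then quoted from \cite{BBG17} (their Proposition~4.7). The organizational difference is in the reduction step. You invoke the one-dimensionality of the space of left $G$-integrals (from \cite{Vire02}) to obtain a single global constant $c$ and then pin it down at $\alpha=1$. The paper instead avoids that appeal by specializing the left $G$-integral defining relations at the pair $(\alpha,1)$: from \eqref{tc comodulus} one has $(\Id_{H_\alpha}\otimes \mu_1(a_1\,?))\Delta_{\alpha,1}(x)=\mu_\alpha(a_\alpha x)1_\alpha$, and from Lemma~\ref{choice of left int} one has $(\Id_{H_\alpha}\otimes \mu_1\circ S_1)\Delta_{\alpha,1}(x)=(\mu_{\alpha^{-1}}\circ S_\alpha)(x)1_\alpha$; since the two $H_1$-forms on the second tensor factor coincide by the classical fact, equality follows directly. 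Your route is marginally less economical in that it uses an extra structural theorem, but it is perhaps conceptually cleaner in making explicit why a single scalar governs all components.
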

\begin{proof}
By \eqref{tc comodulus} we get 
\begin{equation*}
\left(\Id_{H_{\alpha}}\otimes \mu_1(a_1 ?)\right)\Delta_{\alpha,1}(x)=\mu_{\alpha}(a_\alpha x)1_{\alpha}\ \text{for}\ x\in H_{\alpha}.
\end{equation*}
By Lemma \ref{choice of left int} we get
\begin{equation*}
\left(\Id_{H_{\alpha}}\otimes \mu_1\circ S_1\right)\Delta_{\alpha,1}(x)=(\mu_{\alpha^{-1}}\circ S_{\alpha})(x)1_\alpha \ \text{for}\ x\in H_{\alpha}.
\end{equation*}
Furthermore, Proposition 4.7 \cite{BBG17} gives $\mu_{1}(S_{1}(x))=\mu_{1}(a_{1}x)$ for $x\in H_1$. This implies that $\mu_{\alpha}(a_\alpha x)1_{\alpha}=(\mu_{\alpha^{-1}}\circ S_{\alpha})(x)1_\alpha$ for all $x\in H_{\alpha}$, i.e. $\mu_{\alpha^{-1}}(S_{\alpha}(x))=\mu_{\alpha}(a_{\alpha}x)$ for any $x\in H_{\alpha}$.
\end{proof}
A finite type pivotal Hopf $G$-coalgebra $(H, g)$ is called {\em $G$-unibalanced} if its symmetrised right $G$-integral is also left, i.e. $\widetilde{\mu}_{\alpha}=\widetilde{\mu}_{\alpha}^{l}$ for any $\alpha\in G$. 
\begin{Lem}
Assume $(H, g)$ is an unimodular pivotal Hopf $G$-coalgebra. Then $(H, g)$ is
$G$-unibalanced if and only if $a_{\alpha}=g_{\alpha}^{2}$ for any $\alpha\in G$.
\end{Lem}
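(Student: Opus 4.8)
The plan is to convert the functional identity defining $G$-unibalancedness, namely $\widetilde{\mu}_{\alpha}=\widetilde{\mu}_{\alpha}^{l}$, into an identity of elements of $H_{\alpha}$, and then read off $a_{\alpha}=g_{\alpha}^{2}$ by invoking the non-degeneracy proved in Proposition \ref{sr int sym}. First I would unwind both symmetrised integrals through the \emph{fixed} right $G$-integral $\mu$. By definition $\widetilde{\mu}_{\alpha}(x)=\mu_{\alpha}(g_{\alpha}x)$. For the left side I would use the concrete left $G$-integral coming from the $G$-comodulus, $\mu_{\alpha}^{l}(x)=\mu_{\alpha}(a_{\alpha}x)$ from Equation \eqref{another choice left int}; then $\widetilde{\mu}_{\alpha}^{l}(x)=\mu_{\alpha}^{l}(g_{\alpha}^{-1}x)=\mu_{\alpha}(a_{\alpha}g_{\alpha}^{-1}x)$. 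Hence $G$-unibalancedness is exactly the condition $\mu_{\alpha}\big((g_{\alpha}-a_{\alpha}g_{\alpha}^{-1})x\big)=0$ for all $x\in H_{\alpha}$.

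The reverse implication is then immediate: if $a_{\alpha}=g_{\alpha}^{2}$ then $a_{\alpha}g_{\alpha}^{-1}=g_{\alpha}$, so $\widetilde{\mu}_{\alpha}$ and $\widetilde{\mu}_{\alpha}^{l}$ coincide on the nose. For the forward implication I must cancel the universally quantified $x$. Here I would factor out the pivot: setting $h_{\alpha}:=g_{\alpha}-a_{\alpha}g_{\alpha}^{-1}$ and using invertibility of $g_{\alpha}$, I rewrite $\mu_{\alpha}(h_{\alpha}x)=\mu_{\alpha}\big(g_{\alpha}(g_{\alpha}^{-1}h_{\alpha})x\big)=\widetilde{\mu}_{\alpha}\big((g_{\alpha}^{-1}h_{\alpha})x\big)$. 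By hypothesis this vanishes for all $x$, so $g_{\alpha}^{-1}h_{\alpha}$ sits in the radical of the bilinear form $(u,v)\mapsto\widetilde{\mu}_{\alpha}(uv)$; unimodularity enters precisely through Proposition \ref{sr int sym}, which makes this form non-degenerate, forcing $g_{\alpha}^{-1}h_{\alpha}=0$, hence $h_{\alpha}=0$ and $a_{\alpha}=g_{\alpha}^{2}$.

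The one delicate point is the bookkeeping of left versus right multiplication inside $\mu_{\alpha}$: the module description of $H_{\alpha}^{*}$ underlying Proposition \ref{sr int sym} is phrased with right multiplication, and the vanishing I obtain, $\widetilde{\mu}_{\alpha}(ux)=0$ for all $x$ with $u:=g_{\alpha}^{-1}h_{\alpha}$, is vanishing in the \emph{first} variable. I would bridge this using the symmetry of $\widetilde{\mu}_{\alpha}$ (also from Proposition \ref{sr int sym}): $\widetilde{\mu}_{\alpha}(ux)=\widetilde{\mu}_{\alpha}(xu)$, which converts it to vanishing in the second variable and lets the non-degeneracy conclude $u=0$. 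Beyond this absorption of $g_{\alpha}$ into the symmetrised integral, I expect no serious obstacle; the main thing to confirm is only that the normalizations of $\mu$, $\mu^{l}$, $\widetilde{\mu}$ and $\widetilde{\mu}^{l}$ are the mutually consistent ones (all derived from the single chosen $\mu$) implicit in the definition of $G$-unibalanced.
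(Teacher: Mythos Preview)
Your proposal is correct and follows essentially the same route as the paper: both reduce $G$-unibalancedness to the identity $\mu_{\alpha}\big((g_{\alpha}-a_{\alpha}g_{\alpha}^{-1})x\big)=0$ for all $x$, and then invoke the non-degeneracy from Proposition~\ref{sr int sym} to conclude $a_{\alpha}=g_{\alpha}^{2}$. The only cosmetic differences are that for the reverse implication the paper verifies the defining relation~\eqref{s left int} for $\widetilde{\mu}_{\beta}$ starting from~\eqref{G-comodulus}, whereas you substitute $a_{\alpha}=g_{\alpha}^{2}$ directly into the functional form (which is slightly quicker), and in the forward implication the paper appeals to non-degeneracy of $\mu_{\alpha}$ itself rather than first absorbing $g_{\alpha}$ into $\widetilde{\mu}_{\alpha}$---your extra step and the symmetry argument are unnecessary but harmless.
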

\begin{proof}
First, we assume that $a_{\alpha}=g_{\alpha}^{2}$. Applying \eqref{G-comodulus} on $g_{\alpha\beta}x$ we have
\begin{equation*}
(g_{\alpha}^{-1}\otimes \widetilde{\mu}_{\beta})\Delta_{\alpha, \beta}(x)=\widetilde{\mu}_{\alpha\beta}(x)1_{\alpha}.
\end{equation*}
This equality states that $\widetilde{\mu}_{\beta}$ is a symmetrised left $G$-integral, i.e. $\widetilde{\mu}_{\beta}=\widetilde{\mu}_{\beta}^{l}$.
Second, we assume that $(H, g)$ is $G$-unibalanced. By applying the equality \eqref{another choice left int} on $g_{\alpha}^{-1}x$ and the $G$-unibalanced condition one gets
\begin{equation*}
\mu_{\alpha}^{l}(g_{\alpha}^{-1}x)=\widetilde{\mu}_{\alpha}^{l}(x)=\widetilde{\mu}_{\alpha}(x)=\mu_{\alpha}(g_{\alpha}x)=\mu_{\alpha}(a_{\alpha}g_{\alpha}^{-1}x)
\end{equation*}
for any $x\in H_{\alpha}$. The last equality gives 
\begin{equation*}
\mu_{\alpha}\left((a_{\alpha}g_{\alpha}^{-1}-g_{\alpha})x\right)=0 \ \text{for any}\ x\in H_{\alpha}.
\end{equation*}
By Proposition \ref{sr int sym}, $\mu_{\alpha}$ is non-degenerate. Therefore, the above equality holds if and only if $a_{\alpha}=g_{\alpha}^{2}$.
\end{proof}

\section{Traces on finite $G$-graded categories}\label{section trace}
In this section we recall some notions and results from \cite{BBG17}. Let $(H,g)$ be a finite type unimodular pivotal Hopf $G$-coalgebra. We determine the pivotal structure in pivotal $G$-graded category $H$-mod. We also prove the Reduction Lemma in the context of $G$-graded categories and recall the close relation between a modified trace on $H_1$-pmod and a symmetrised integral for $H_1$ \cite{BBG17}.
\subsection{Cyclic traces}
Let $\mathcal{C}$ be a $\Bbbk$-linear category.
We call {\em cyclic trace} on $\mathcal{C}$ a family of $\Bbbk$-linear maps 
\begin{equation} \label{tc trace}
t=\{t_{P}:\ \End_{\mathcal{C}}(P) \rightarrow \Bbbk\}_{P \in \mathcal{C}}
\end{equation}
satisfying cyclicity property, i.e. $t_{V}(gh)=t_{U}(hg)$ for  $g\in \Hom_{\mathcal{C}}(U, V)$ and $h\in \Hom_{\mathcal{C}}(V, U)$ with $U, V\in \mathcal{C}$.
We say that a cyclic trace $t$ is {\em non-degenerate} if the pairings
\begin{equation}\label{trace non-dege}
\Hom_{\mathcal{C}}(M,P) \times \Hom_{\mathcal{C}}(P,M)\rightarrow \Bbbk, \ (f,g)\mapsto t_{P}(f g)
\end{equation}
are non-degenerate for all $P, M \in \mathcal{C}$.

For a finite dimensional algebra $A$, let $A$-pmod be the category of projective $A$-modules.
There is a bijection from the space of cyclic traces on $A$-pmod to the space of symmetric linear forms on $A$:
\begin{Lem}\label{ds-pt}
There is an isomorphism of algebras 
$$R:\ A^{op} \rightarrow \End_{A}(A) $$ 
given by 
$$R(h)=R_h, \qquad R^{-1}(f)=f(1) $$
where $R_h$ denotes the right multiplication with $h$, i.e. $R_h(x)=xh$ for any $ x \in A$.
\end{Lem}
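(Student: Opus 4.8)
The plan is to verify directly that $R$ is a well-defined unital algebra homomorphism and then to exhibit the stated two-sided inverse. First I would check that for each $h \in A$ the right multiplication $R_h$ genuinely lands in $\End_A(A)$: viewing $A$ as a left module over itself, for $a, x \in A$ associativity gives $R_h(ax) = (ax)h = a(xh) = a\,R_h(x)$, so $R_h$ is left $A$-linear. Linearity of the assignment $h \mapsto R_h$ over $\Bbbk$ is immediate from distributivity of the product, so $R$ is at least a $\Bbbk$-linear map $A \to \End_A(A)$.

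The one genuinely conceptual point — and the reason the source of the homomorphism is $A^{op}$ rather than $A$ — is that composing right multiplications reverses their order. For $h, k, x \in A$ one computes $(R_h \circ R_k)(x) = R_h(xk) = (xk)h = x(kh) = R_{kh}(x)$, hence $R_h \circ R_k = R_{kh}$. Writing the product of $A^{op}$ as $h \cdot_{op} k = kh$, this reads $R(h) \circ R(k) = R(h \cdot_{op} k)$, so $R$ is multiplicative precisely as a map out of $A^{op}$. Together with $R(1) = R_1 = \Id_A$, this exhibits $R$ as a unital algebra homomorphism $A^{op} \to \End_A(A)$.

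It remains to prove bijectivity, for which I would use the explicit candidate inverse $f \mapsto f(1)$. The key observation is that a left $A$-module endomorphism of $A$ is determined by its value at the unit: for $f \in \End_A(A)$ and any $x \in A$, left linearity gives $f(x) = f(x \cdot 1) = x\,f(1) = R_{f(1)}(x)$, so $f = R_{f(1)}$. This yields $R(f(1)) = f$ on the one hand and $R_h(1) = 1\cdot h = h$ on the other, so $h \mapsto R_h$ and $f \mapsto f(1)$ are mutually inverse and $R$ is an isomorphism of algebras. I do not expect any real obstacle here: every step follows from associativity and left linearity, and the only point requiring care is the order reversal in the composition computation, which is exactly what forces the opposite algebra to appear on the source side.
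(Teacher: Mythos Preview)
Your proof is correct and complete: you verify that $R_h$ is left $A$-linear, that $R$ respects the opposite multiplication (the only subtle point), that $R$ is unital, and that $f\mapsto f(1)$ is a two-sided inverse. The paper itself states this lemma without proof, treating it as a standard fact, so there is no approach to compare against; your argument is the expected one.
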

Lemma \ref{ds-pt} implies that if $t$ is a cyclic trace on $A$-pmod then 
\begin{equation}\label{Eq xd linear form}
\lambda(h)=t_{A}(R_h)
\end{equation}
defines a symmetric linear form on $A$.
\begin{Pro}\cite[Proposition 2.4]{BBG17}\label{pro xd cyclic trace tu sym linear}
A symmetric linear form $\lambda$ on a finite dimensional algebra $A$ extends uniquely to a family of cyclic traces $\{t_{P}:\ \End_{A}(P) \rightarrow \Bbbk\}_{P\in A\text{-pmod}}$ which satisfies Equality \eqref{Eq xd linear form}.
\end{Pro}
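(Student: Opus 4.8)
The plan is to construct the family $t$ directly---first on free modules, then on arbitrary projectives---the crucial point being that the symmetry of $\lambda$ is exactly what forces cyclicity; afterwards I would check that \eqref{Eq xd linear form} pins the family down uniquely. On a free module $A^{n}$, applying Lemma~\ref{ds-pt} entrywise identifies $\End_{A}(A^{n})$ with $\Mat_{n}(A^{op})$, so any $f\in\End_{A}(A^{n})$ is a matrix $(f_{ij})$ with $f_{ij}\in\End_{A}(A)$ and $f_{ij}(1)\in A$; I set $t_{A^{n}}(f):=\sum_{i=1}^{n}\lambda\bigl(f_{ii}(1)\bigr)$. For cyclicity I take $g\in\Hom_{A}(A^{m},A^{n})$ and $h\in\Hom_{A}(A^{n},A^{m})$ with entries $g_{ij}=R_{a_{ij}}$, $h_{kl}=R_{b_{kl}}$, and compute using $R_{a}\circ R_{b}=R_{ba}$ that $t_{A^{n}}(gh)=\sum_{i,j}\lambda(b_{ji}a_{ij})$ and $t_{A^{m}}(hg)=\sum_{i,j}\lambda(a_{ij}b_{ji})$; these coincide precisely because $\lambda$ is symmetric, and this is the only place the hypothesis on $\lambda$ is used.

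Then I would pass to an arbitrary projective $P$. I fix a realization as a direct summand of a free module, i.e. maps $\iota\colon P\to A^{n}$ and $\pi\colon A^{n}\to P$ with $\pi\iota=\Id_{P}$, and define $t_{P}(f):=t_{A^{n}}(\iota f\pi)$ for $f\in\End_{A}(P)$. The main obstacle is to show this is independent of the chosen realization, and this is where cyclicity on free modules earns its keep. Given a second realization $\iota',\pi'$ into $A^{m}$, I set $g=\iota\pi'\colon A^{m}\to A^{n}$ and $h=\iota' f\pi\colon A^{n}\to A^{m}$; then $gh=\iota f\pi$ and $hg=\iota' f\pi'$ (using $\pi\iota=\Id_{P}$ and $\pi'\iota'=\Id_{P}$), so cyclicity on free modules identifies the two candidate values. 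Cyclicity on projectives follows by the same device: for $g\in\Hom_{A}(U,V)$ and $h\in\Hom_{A}(V,U)$, with $U,V$ embedded as summands of $A^{n},A^{m}$, I form $G=\iota_{V}g\pi_{U}$ and $H=\iota_{U}h\pi_{V}$, verify $GH=\iota_{V}(gh)\pi_{V}$ and $HG=\iota_{U}(hg)\pi_{U}$, and invoke cyclicity on free modules once more.

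Finally, \eqref{Eq xd linear form} is just the case $n=1$, since $t_{A}(R_{h})=\lambda\bigl(R_{h}(1)\bigr)=\lambda(h)$. For uniqueness, let $t'$ be any cyclic trace with $t'_{A}(R_{h})=\lambda(h)$. Writing $f\in\End_{A}(A^{n})$ as $f=\sum_{i,j}\iota_{i}f_{ij}\pi_{j}$ through the factor inclusions $\iota_{i}\colon A\to A^{n}$ and projections $\pi_{j}\colon A^{n}\to A$, cyclicity together with $\pi_{j}\iota_{i}=\delta_{ij}\Id_{A}$ forces $t'_{A^{n}}(f)=\sum_{i}t'_{A}(f_{ii})=\sum_{i}\lambda\bigl(f_{ii}(1)\bigr)$, so $t'$ agrees with the constructed $t$ on free modules; and for a projective $P$ with realization $\iota,\pi$, cyclicity gives $t'_{P}(f)=t'_{A^{n}}(\iota f\pi)=t_{P}(f)$. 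Hence $t'=t$, which establishes both existence and uniqueness.
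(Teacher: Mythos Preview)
Your proof is correct and complete. The paper itself does not prove this proposition but cites \cite[Proposition~2.4]{BBG17}; the formula \eqref{bt xd trace} immediately following the statement (writing $f=\sum_i a_ib_i$ with $a_i\in\Hom(A,P)$, $b_i\in\Hom(P,A)$ and setting $t_P(f)=\sum_i\lambda(b_ia_i(1))$) is exactly what your construction unwinds to once you expand $\iota f\pi$ componentwise, so your argument is the standard one and matches the cited approach.
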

If $f\in \End_{A}(P)$, one can find $a_i\in \Hom(A, P),\ b_i\in \Hom(P,A)\ i \in I$ for some finite set $I$ such that $f=\sum_{i\in I}a_i b_i$ (see \cite{BBG17}). Then the cyclicity property of $t$ implies that 
\begin{equation}\label{bt xd trace}
t_{P}(f)=\sum_{i\in I}t_A(b_i a_i)=\sum_{i\in I}\lambda\left(b_i a_i(1)\right).
\end{equation}

Furthermore, the non-degeneracy of the form linear $\lambda$ is equivalent to the one of the pairings \eqref{trace non-dege} determined by $\left(t_P\right)_{P\in A\text{-pmod}}$ in \eqref{bt xd trace} (see \cite{BBG17}, Theorem 2.6 where a stronger non-degeneracy condition for traces is considered).
\subsection{Modified trace in pivotal category}\label{Section modified trace}
Let $\mathcal{C}$ be a {\em pivotal} $\Bbbk$-linear category \cite{BePM12}. 
Then $\mathcal{C}$ is a strict monoidal $\Bbbk$-linear category, with a unit object $\mathbb{I}$, equipped with the data for each object $V\in \mathcal{C}$ of its dual object $V^{*}\in \mathcal{C}$ and of four morphisms
\begin{align*}
&\ev_{V}:\ V^{*} \otimes V \rightarrow \mathbb{I}, &\coev_{V}:\ 
\mathbb{I} \rightarrow V \otimes V^{*},\\
&\tev_{V}:\ V \otimes V^{*} \rightarrow \mathbb{I}, &\tcoev_{V}:\ \mathbb{I} \rightarrow V^{*} \otimes V
\end{align*} 
such that $(\ev_{V}, \coev_{V})$ and $(\tev_{V}, \tcoev_{V})$ are dualities which induce the same functor duality which is monoidal. In the category $\mathcal{C}$ there is a family of isomorphisms
\begin{equation*}
\Phi = \{\Phi_{V}=(\tev_{V} \otimes \Id_{V^{**}})  (\Id_{V} \otimes \coev_{V^{*}}):\ V \rightarrow V^{**}\}_{V \in \mathcal{C}}
\end{equation*}
which is a monoidal natural isomorphism called the pivotal structure.\\
We recall the notion of a {\em modified trace} on ideal in a pivotal category $\mathcal{C}$ which be introduced in \cite{NgJkBp13, GPV13}. Given $U, V, W \in \mathcal{C}$ and $f\in \End_{\mathcal{C}}(W \otimes V)$. 
The {\em left partial trace} (with respect to $W$) is the map 
\begin{equation*}
\tr_{W}^{l}:\ \Hom_{\mathcal{C}}(W\otimes U, W\otimes V) \rightarrow \Hom_{\mathcal{C}}(U, V)
\end{equation*}
defined for $f\in \Hom_{\mathcal{C}}(W\otimes U, W\otimes V)$ by
\begin{equation*}
\tr_{W}^{l}(f)=(\ev_{W}\otimes \Id_{V}) (\Id_{W^{*}}\otimes f) (\tcoev_{W}\otimes \Id_{V})=\epsh{fig436}{10ex}\put(-16,1){\ms{f}} \put(-10,32){\ms{V}} \put(-42,25){\ms{W}} \put(-10,-32){\ms{U}} \in \Hom_{\mathcal{C}}(U, V).
\end{equation*}

The {\em right partial trace} (with respect to $W$) is the map 
\begin{equation*}
\tr_{W}^{r}:\ \Hom_{\mathcal{C}}(U\otimes W, V\otimes W) \rightarrow \Hom_{\mathcal{C}}(U, V)
\end{equation*}
defined for $f\in \Hom_{\mathcal{C}}(U\otimes W, V\otimes W)$ by
\begin{equation}\label{dn r trace}
\tr_{W}^{r}(f)=(\Id_{V}\otimes \tev_{W}) (f \otimes \Id_{W^{*}}) (\Id_{U}\otimes \coev_{W})= \epsh{fig437}{10ex}\put(-24,1){\ms{f}} \put(-1,25){\ms{W}} \put(-33,32){\ms{V}} \put(-33,-32){\ms{U}} \in \Hom_{\mathcal{C}}(U, V).
\end{equation}
Let $\textbf{Proj}(\mathcal{C})$ be the tensor ideal of projective objects of $\mathcal{C}$.
A {\em left modified trace} on $\textbf{Proj}(\mathcal{C})$ is a cyclic trace $\mtr$ on $\textbf{Proj}(\mathcal{C})$ satisfying 
\begin{equation*}
\mtr_{W\otimes P}(f)=\mtr_{P}(\tr_{W}^{l}(f))
\end{equation*}
for any $f\in \End_{\mathcal{C}}(W\otimes P)$ with $P\in \textbf{Proj}(\mathcal{C})$ and $W\in \mathcal{C}$.

A {\em right modified trace} on $\textbf{Proj}(\mathcal{C})$ is a cyclic trace $\mtr$ on $\textbf{Proj}(\mathcal{C})$ satisfying
\begin{equation*}
\mtr_{P\otimes W}(f)=\mtr_{P}(\tr_{W}^{r}(f))
\end{equation*}
for any $f\in \End_{\mathcal{C}}(P\otimes W)$ with $P\in \textbf{Proj}(\mathcal{C})$ and $W\in \mathcal{C}$.


A {\em modified trace} on ideal $\textbf{Proj}(\mathcal{C})$ is a cyclic trace $\mtr$ on $\textbf{Proj}(\mathcal{C})$
which is both a left and right trace on $\textbf{Proj}(\mathcal{C})$.



Next we define the category of $H$-mod which is a pivotal $G$-graded category.
\subsection{Pivotal structure on $H$-mod}
\subsubsection{$G$-graded category} Given a multiplicative group $G$, we call the category $\mathcal{C}$ pivotal $G$-graded $\Bbbk$-linear if there exists a family of full subcategories $(\mathcal{C}_{\alpha})_{\alpha\in G}$ of $\mathcal{C}$ such that
\begin{itemize}
\item $\mathbb{I}\in \mathcal{C}_1$.
\item $\forall (\alpha, \beta)\in G^2, \ \forall (V, W) \in \mathcal{C}_\alpha \times \mathcal{C}_\beta, \ \Hom_{\mathcal{C}}(V, W)\neq \{0\} \Rightarrow \alpha=\beta$.
\item $\forall V \in \mathcal{C}, \ \exists n\in \mathbb{N},\ \exists (\alpha_1, ..., \alpha_n)\in G^n, \ \exists  V_i \in \mathcal{C}_{\alpha_i}$ for $i=1, ..., n$ such that $V\simeq V_1\oplus ... \oplus V_n$.
\item $\forall (V, W) \in \mathcal{C}_\alpha \times \mathcal{C}_\beta,\ V\otimes W\in \mathcal{C}_{\alpha\beta}$.
\item $\forall \alpha\in G,\ \mathcal{C}_\alpha$ does not reduce to null object.
\end{itemize}
\subsubsection{Pivotal structure on $H$-mod}
Let $(H,g)=(\{H_{\alpha}\}_{\alpha \in G}, \Delta, \varepsilon, S, g)$ be a finite type pivotal Hopf $G$-coalgebra, let $\mathcal{C}$ be the $\Bbbk$-linear category $\bigoplus_{\alpha\in G}\mathcal{C}_{\alpha}$ in which $\mathcal{C}_{\alpha}$ is $H_{\alpha}$-mod the category of finite dimensional $H_{\alpha}$-modules. An object $V$ of $\mathcal{C}$ is a finite direct sum $V_{\alpha_1}\oplus ... \oplus V_{\alpha_n}$ where $V_{\alpha_i}\in \mathcal{C}_{\alpha_i}$.
Each object $V$ in $H_{\alpha}$-mod has a dual $V^{*}=\Hom_{\Bbbk}(V, \Bbbk)$ in $H_{\alpha^{-1}}$-mod with the $H_{\alpha^{-1}}$ action defined by $(hf)(x)=f(S_{\alpha^{-1}}(h)x)$ for $h\in H_{\alpha^{-1}},\ f \in V^{*}$ and $x \in V$.
The category $\mathcal{C}$ is a $G$-graded tensor category, i.e. for $V_\alpha \in \mathcal{C}_{\alpha},\  V_\beta \in \mathcal{C}_{\beta} \ V_\alpha \otimes V_\beta \in \mathcal{C}_{\alpha\beta}$ and for $\alpha \neq \beta \ \Hom_{\mathcal{C}}(V_\alpha, V_\beta)=0$.\\
Then $\mathcal{C}$ is a pivotal category with pivotal structure given by the left and right duality morphisms as follows. Assume that $\{v_j \ |\ j\in J\}$ is a basis of $V\in H_{\alpha}$-mod and $\{v^j \ |\ j\in J\}$ is the dual basis of $V^{*}$, then
\begin{align}
&\ev_V:\ V^* \otimes V \rightarrow \Bbbk,\qquad  f\otimes v \mapsto f(v),\label{left dual} \\
&\coev_V:\ \Bbbk \rightarrow V \otimes V^*,\qquad  1 \mapsto \sum_{j\in J}v_j\otimes v^j, \nonumber\\
&\tev_V:\ V\otimes V^* \rightarrow \Bbbk, \qquad v\otimes f \mapsto f(g_{\alpha}v),\label{right dual}\\
&\tcoev_V:\ \Bbbk \rightarrow V^*\otimes V, \qquad 1 \mapsto \sum_{i\in J}v^i \otimes g_{\alpha}^{-1}v_i. \nonumber
\end{align}
We call $H$-pmod or $\textbf{Proj}(\mathcal{C})$ the ideal of projective $H$-modules. As $\mathcal{C}=\bigoplus_{\alpha\in G}\mathcal{C}_{\alpha}$, the projective modules of $\mathcal{C}_{\alpha}$ are in $H\text{-pmod}\cap \mathcal{C}_{\alpha}=H_{\alpha}\text{-pmod}$.
\begin{Lem}\label{lemma partial trace of cyclic trace}
Let $(H, g)$ be a finite type pivotal Hopf $G$-coalgebra. 
Let $t$ be a cyclic trace on $H$-pmod. Let $V\in H\text{-pmod}$ and $_\varepsilon  W\in H_{1}\text{-mod}$ be endowed with the trivial action $\rho_{_\varepsilon W}=\varepsilon \Id_{_\varepsilon W}$. Then 
\begin{equation}\label{lemma partial trace of cyclic trace right}
\forall f\in \End_{H\text{-mod}}(V\otimes\ _\varepsilon W),\
t_{V\otimes\ _\varepsilon W}(f)=t_{V}(\tr_{_\varepsilon W}^{r}(f))
\end{equation}
and
\begin{equation}\label{lemma partial trace of cyclic trace left}
\forall f\in \End_{H\text{-mod}}(_\varepsilon W\otimes V ),\
t_{_\varepsilon W\otimes V}(f)=t_{V}(\tr_{_\varepsilon W}^{l}(f)).
\end{equation}
\end{Lem}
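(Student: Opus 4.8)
The plan is to reduce both identities to the cyclicity of $t$, exploiting that ${}_\varepsilon W$ carries the trivial $H_1$-action, which forces $V\otimes {}_\varepsilon W$ to be merely a direct sum of copies of $V$ and makes the pivotal element act as the identity on ${}_\varepsilon W$. Concretely, I would fix a basis $\{w_k\}_{k\in K}$ of $W$ with dual basis $\{w^k\}_{k\in K}$ and introduce the inclusion and projection maps
$$\iota_k:\ V\to V\otimes {}_\varepsilon W,\quad v\mapsto v\otimes w_k,\qquad \pi_k:\ V\otimes {}_\varepsilon W\to V,\quad v\otimes w\mapsto w^k(w)\,v.$$
The essential point is that these are morphisms in $H$-mod. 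For homogeneous $V\in\mathcal{C}_\alpha$ the action of $h$ on $V\otimes {}_\varepsilon W$ is through $\Delta_{\alpha,1}(h)$, and since $h_{(2)}\in H_1$ acts on ${}_\varepsilon W$ by the scalar $\varepsilon(h_{(2)})$, the counit axiom $(\Id_{H_\alpha}\otimes\varepsilon)\Delta_{\alpha,1}=\Id_{H_\alpha}$ collapses the action to $h\cdot(v\otimes w)=hv\otimes w$; hence $\iota_k$ and $\pi_k$ commute with the $H$-action, and the general case follows by direct-sum additivity. By construction they satisfy $\pi_l\iota_k=\delta_{kl}\Id_V$ and $\sum_{k}\iota_k\pi_k=\Id_{V\otimes {}_\varepsilon W}$.

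Next I would expand any $f\in\End_{H\text{-mod}}(V\otimes {}_\varepsilon W)$ as $f=\sum_{k,l}\iota_k f_{kl}\pi_l$ with $f_{kl}:=\pi_k f\iota_l\in\End_{H\text{-mod}}(V)$, and compute $t_{V\otimes {}_\varepsilon W}(f)$ termwise. Applying cyclicity to each summand with $g=f_{kl}\pi_l$ and $h=\iota_k$, and using $\pi_l\iota_k=\delta_{kl}\Id_V$, yields $t_{V\otimes {}_\varepsilon W}(\iota_k f_{kl}\pi_l)=\delta_{kl}\,t_V(f_{kl})$, so that
$$t_{V\otimes {}_\varepsilon W}(f)=\sum_{k}t_V(\pi_k f\iota_k)=t_V\Big(\sum_{k}\pi_k f\iota_k\Big).$$

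It then remains to identify $\sum_{k}\pi_k f\iota_k$ with the categorical partial trace $\tr_{{}_\varepsilon W}^{r}(f)$. Unwinding the definition of $\tr^r_{{}_\varepsilon W}$ with $\coev_{{}_\varepsilon W}(1)=\sum_k w_k\otimes w^k$ and $\tev_{{}_\varepsilon W}(w\otimes \phi)=\phi(g_1 w)$, and using that $g_1$ acts as $\varepsilon(g_1)\Id=\Id$ on ${}_\varepsilon W$, a direct computation gives $\tr^r_{{}_\varepsilon W}(f)(v)=\sum_k\pi_k f\iota_k(v)$, which proves the first identity. For the second identity the argument is symmetric: I would replace $\iota_k,\pi_k$ by $v\mapsto w_k\otimes v$ and $w\otimes v\mapsto w^k(w)v$, use $\tcoev_{{}_\varepsilon W}(1)=\sum_k w^k\otimes g_1^{-1}w_k=\sum_k w^k\otimes w_k$ together with $\ev_{{}_\varepsilon W}$, and identify $\sum_k\pi_k f\iota_k$ with $\tr^l_{{}_\varepsilon W}(f)$.

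I expect the main obstacle to be this last step: matching the categorical partial trace, whose very definition drags the pivotal element $g_1$ through $\tev$ and $\tcoev$, with the elementary sum $\sum_k\pi_k f\iota_k$ of naive projections. The triviality $\rho_{{}_\varepsilon W}=\varepsilon\Id$ together with $\varepsilon(g_1)=1_\Bbbk$ is precisely what makes $g_1$ and $g_1^{-1}$ disappear from $\tev$ and $\tcoev$, so that the left and right partial traces collapse to the \emph{same} elementary expression; keeping careful track of where $g_1$ enters and verifying it acts as the identity is the one delicate piece of bookkeeping, everything else being formal consequences of cyclicity and the counit axiom.
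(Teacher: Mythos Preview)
Your argument is correct and follows essentially the same route as the paper's proof: both pick a basis of ${}_\varepsilon W$, use the resulting inclusions and projections (the paper's $\widetilde e_i,\widetilde\varphi_i$ are exactly your $\iota_k,\pi_k$) together with cyclicity to reduce $t_{V\otimes{}_\varepsilon W}(f)$ to $\sum_k t_V(f_{kk})$, and then observe that the pivotal element acts trivially on ${}_\varepsilon W$ so that the categorical partial trace collapses to the same sum. Your write-up is in fact a bit more explicit than the paper's in justifying, via the counit axiom, why $\iota_k$ and $\pi_k$ are $H$-module maps.
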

\begin{proof}
Consider a decomposition of $\Id_{_\varepsilon W}$
\begin{equation}\label{pt id W}
\Id_{_\varepsilon W}=\sum_{i\in I}e_i\varphi_i \ \text{where}\ \varphi_i:\ _\varepsilon W \rightarrow \Bbbk,\ e_i:\ \Bbbk \rightarrow\ _\varepsilon W,\ \varphi_i(e_j)=\delta_{ij}.
\end{equation}
By setting $\widetilde{e}_i=\Id_V\otimes e_i:\ V\rightarrow V\otimes\ _\varepsilon W$ and $\widetilde{\varphi}_i=\Id_V\otimes \varphi_i:\  V\otimes\ _\varepsilon W\rightarrow V$ one gets 
\begin{equation}\label{pt id VxW}
\Id_{V\otimes\ _\varepsilon W}=\sum_{i\in I}\widetilde{e}_i\widetilde{\varphi}_i .
\end{equation}
For $f\in \End_{H\text{-mod}}(V\otimes\ _\varepsilon W)$, on the one hand we have
\begin{equation*}
t_{V\otimes\ _\varepsilon W}(f)=\sum_{i\in I}t_{V\otimes\ _\varepsilon W}(f\widetilde{e}_i\widetilde{\varphi}_i)
=\sum_{i\in I}t_{V}(\widetilde{\varphi}_i f\widetilde{e}_i) =\sum_{i\in I}t_{V}(f_{ii})
\end{equation*}
where $f_{ii}=\widetilde{\varphi}_i f\widetilde{e}_i\in \End_{H\text{-mod}}(V)$. In the above calculations, we use Equation \eqref{pt id VxW} in the first equality and the cyclicity property in the second equality. \\
On the other hand, each map $f\in \End_{H\text{-mod}}(V\otimes\ _\varepsilon W)$ is presented by graph below
\begin{equation*}
\epsh{fig425phi}{14ex}\put(-22,1){\ms{f}}\put(-36,-42){\ms{V}} \put(-33,42){\ms{V}} \put(-13,42){\ms{_\varepsilon W}} \put(-13,-42){\ms{_\varepsilon W}}\
=\ \sum_{i, j\in I}\epsh{fig463}{22ex}\put(-20,1){\ms{f}} \put(-13, 41){\ms{e_i}} \put(-14, 23){\ms{\varphi_i}} 
\put(-14, -39){\ms{\varphi_j}} \put(-13, -20){\ms{e_j}}\ 
=\  \sum_{i, j\in I}\epsh{fig464}{22ex}\put(-33,1){\ms{f}} \put(-27, 23){\ms{\varphi_i}}  \put(-26, -20){\ms{e_j}} \put(-9, 33){\ms{e_i}} \put(-9, -32){\ms{\varphi_j}}\ 
=\  \sum_{i, j\in I}f_{ij}\otimes (e_i \varphi_j) \Id_{_\varepsilon W}
\end{equation*}
where $f_{ij}=\widetilde{\varphi}_i f\widetilde{e}_j\in \End_{H\text{-mod}}(V)$. From this graphical representation implies
\begin{equation*}
t_{V}(\tr_{_\varepsilon W}^{r}(f))=\sum_{i, j\in I}t_{V}\left(\epsh{fig465}{22ex}\right)\put(-67,1){\ms{f}} \put(-61, 23){\ms{\varphi_i}}  \put(-61, -20){\ms{e_j}} \put(-43, 33){\ms{e_i}} \put(-43, -32){\ms{\varphi_j}} \put(-77,-62){\ms{V}}  \put(-32,-62){\ms{_\varepsilon W}}
=\sum_{i\in I}t_{V}(f_{ii}).
\end{equation*}
Therefore Equality \eqref{lemma partial trace of cyclic trace right} holds.\\
Remark that the pivotal element acts trivially on $_\varepsilon W$ so the evaluation $\tev_{_\varepsilon W}$ in $H$-mod is just the usual evaluation of $\Vect_{\Bbbk}$.

For Equality \eqref{lemma partial trace of cyclic trace left} the proof is similar.
\end{proof}
\subsubsection{Reduction Lemma} We have a graded version of Reduction Lemma \cite[Lemma 3.2]{BBG17}
\begin{Lem}\label{hq chinh}
Let $(H, g)$ be a finite type unimodular pivotal Hopf $G$-coalgebra and $\lambda=(\lambda^{\alpha})_{\alpha\in G}\in \prod_{\alpha\in G}H_{\alpha}^{*}$ be a family of symmetric linear forms and $\mtr=(\mtr^{\alpha})_{\alpha\in G}$ be the associated cyclic traces. Then $\mtr$ is  a right modified trace on $H$-pmod if and only if for all $\alpha, \beta\in G$ and for all $f\in \End_{H_{\alpha\beta}}(H_{\alpha}\otimes H_{\beta})$
\begin{equation}\label{tc right trace tren pt sinh}
\mtr_{H_{\alpha}\otimes H_{\beta}}^{\alpha\beta}(f)=\mtr_{H_{\alpha}}^{\alpha}(\tr_{H_{\beta}}^{r}(f)).
\end{equation}
Similarly, $\mtr$ is a left modified trace on $H$-pmod if and only if for all $f\in \End_{H_{\alpha\beta}}(H_{\alpha}\otimes H_{\beta})$
\begin{equation*}
\mtr_{H_{\alpha}\otimes H_{\beta}}^{\alpha\beta}(f)=\mtr_{H_{\beta}}^{\beta}(\tr_{H_{\alpha}}^{l}(f)).
\end{equation*}
\end{Lem}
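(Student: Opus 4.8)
The plan is to treat the two implications separately: the forward one is a direct specialization, while the converse is an induction from the ``generators'' $H_\alpha\otimes H_\beta$ up to all pairs $(P,W)$, using cyclicity, the naturality of the partial traces, additivity, and—crucially—Lemma \ref{lemma partial trace of cyclic trace}. For the forward implication, if $\mtr$ is a right modified trace then $\mtr_{P\otimes W}(f)=\mtr_{P}(\tr^{r}_{W}(f))$ holds for every projective $P$ and every object $W$; specializing to $P=H_\alpha$ (projective, being free over $H_\alpha$) and $W=H_\beta$ gives exactly \eqref{tc right trace tren pt sinh}. So all the content is in the converse.

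For the converse, assume \eqref{tc right trace tren pt sinh} for all $\alpha,\beta$. Since $\Hom$ vanishes between distinct graded components and both sides are additive, I may take $P\in\mathcal{C}_\alpha$ projective and $W\in\mathcal{C}_\beta$ homogeneous. I first reduce the projective slot to the free module $H_\alpha$. Writing $P$ as a retract of $H_\alpha^{\,n}$, say $\pi\iota=\Id_{P}$ with $\iota\colon P\to H_\alpha^{\,n}$ and $\pi\colon H_\alpha^{\,n}\to P$, I insert $\pi\iota$ into $f$, use cyclicity to transfer $(\iota\otimes\Id_{W})\,f\,(\pi\otimes\Id_{W})$ to $H_\alpha^{\,n}\otimes W$, apply the naturality identity $\tr^{r}_{W}\big((\iota\otimes\Id_{W})f(\pi\otimes\Id_{W})\big)=\iota\,\tr^{r}_{W}(f)\,\pi$ together with cyclicity again, and conclude with $\pi\iota=\Id_{P}$; a block-diagonal computation then passes from $H_\alpha^{\,n}$ down to $H_\alpha$. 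This reduces everything to the identity $\mtr_{H_\alpha\otimes W}(f)=\mtr_{H_\alpha}(\tr^{r}_{W}(f))$ for an arbitrary $H_\beta$-module $W$, which must be deduced from the case $W=H_\beta$.

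The reduction of the arbitrary slot $W$ to $H_\beta$ is the heart of the proof, and the main obstacle is that $W$ need \emph{not} be projective, so it cannot be split off a free module and the retraction trick above is unavailable. The device I would use is the $H_{\alpha\beta}$-linear isomorphism $\Theta_{W}\colon H_{\alpha\beta}\otimes{}_{\varepsilon}W\xrightarrow{\ \sim\ }H_\alpha\otimes W$, $h\otimes w\mapsto\Delta_{\alpha,\beta}(h)(1_\alpha\otimes w)$, where ${}_{\varepsilon}W$ carries the trivial action; it is a morphism precisely because $\Delta_{\alpha,\beta}$ is an algebra map, and it converts the genuine $H_\beta$-action on $W$ into a trivial action over the larger algebra $H_{\alpha\beta}$. (This is the graded incarnation of the free-module isomorphism $H\otimes V\cong H\otimes{}_{\varepsilon}V$, equivalently the decomposition of $H_\alpha\otimes H_\beta$ of Section 4, which is exactly why that decomposition is needed.) Transporting $f$ to $F=\Theta_{W}^{-1}f\,\Theta_{W}$, cyclicity gives $\mtr_{H_\alpha\otimes W}(f)=\mtr_{H_{\alpha\beta}\otimes{}_{\varepsilon}W}(F)$, and since the outer factor is now trivial, Lemma \ref{lemma partial trace of cyclic trace} yields $\mtr_{H_{\alpha\beta}\otimes{}_{\varepsilon}W}(F)=\mtr_{H_{\alpha\beta}}\big(\tr^{r}_{{}_{\varepsilon}W}(F)\big)$.

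It then remains to identify $\mtr_{H_{\alpha\beta}}\big(\tr^{r}_{{}_{\varepsilon}W}(F)\big)$ with $\mtr_{H_\alpha}(\tr^{r}_{W}(f))$. Here one must reconcile the pivot-twisted trace $\tr^{r}_{W}$ (built from $\tev_{W},\coev_{W}$, hence from $g_\beta$) with the untwisted $\tr^{r}_{{}_{\varepsilon}W}$, which is done by pushing the pivot through $\Theta_{W}$ by means of the grouplike compatibility $\Delta_{\alpha,\beta}(g_{\alpha\beta})=g_\alpha\otimes g_\beta$. Because the isomorphisms $\Theta_{W}$ are natural in $W$ and, on the trivial side, every ${}_{\varepsilon}W$ is a direct summand of a sum of copies of ${}_{\varepsilon}H_\beta$ (trivial modules being semisimple), this identification is natural in $W$ and therefore reduces to the single case $W=H_\beta$, which is the hypothesis \eqref{tc right trace tren pt sinh} rewritten through $\Theta_{H_\beta}$ and Lemma \ref{lemma partial trace of cyclic trace}. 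This establishes $\mtr_{H_\alpha\otimes W}(f)=\mtr_{H_\alpha}(\tr^{r}_{W}(f))$ for all $W$ and completes the converse. The statement for the left modified trace is obtained by the mirror argument, replacing $\tr^{r}_{W}$ by $\tr^{l}_{W}$, tensoring ${}_{\varepsilon}W$ on the left, and using the symmetrised left $G$-integral.
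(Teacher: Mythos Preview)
Your forward direction and the reduction of the projective slot $P$ to $H_\alpha$ are fine. The gap is in the last paragraph, where you claim the identification $\mtr_{H_{\alpha\beta}}\big(\tr^r_{{}_\varepsilon W}(F)\big)=\mtr_{H_\alpha}\big(\tr^r_W(f)\big)$ reduces by naturality to $W=H_\beta$. The problem is a mismatch of naturalities: $\Theta_W$ is natural only in $H_\beta$-\emph{linear} maps, while the splitting ${}_\varepsilon W\hookrightarrow({}_\varepsilon H_\beta)^m$ you invoke uses maps $e,p$ that are merely $\Bbbk$-linear. On the trivial side $(\Id\otimes e),(\Id\otimes p)$ are indeed $H_{\alpha\beta}$-linear, so you can transport $F$ to some $\hat F$ on $H_{\alpha\beta}\otimes{}_\varepsilon H_\beta^{\,m}$ with the same partial trace over ${}_\varepsilon W$; but pushing $\hat F$ back through $\Theta_{H_\beta^{\,m}}$ produces an $\hat f$ that is \emph{not} $(\Id_{H_\alpha}\otimes e)\,f\,(\Id_{H_\alpha}\otimes p)$, precisely because $\Theta_{W'}(\Id\otimes e)\Theta_W^{-1}=\Id_{H_\alpha}\otimes e$ fails when $e$ is not $H_\beta$-linear. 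Hence there is no reason $\tr^r_{H_\beta^{\,m}}(\hat f)$ should equal $\tr^r_W(f)$, and the reduction does not go through. The pivot compatibility $\Delta_{\alpha,\beta}(g_{\alpha\beta})=g_\alpha\otimes g_\beta$ does not rescue this: the obstruction is not the pivot twist but the fact that $\mtr_{H_\alpha}(\tr^r_W(f))$ genuinely depends on the $H_\beta$-module structure of $W$, which your trivial-side decomposition forgets.

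The paper circumvents this by a different device. It first proves the partial-trace identity for pairs $(P,P')$ with \emph{both} factors projective (your retract trick applied simultaneously on both sides). Then, for arbitrary $V\in H_\beta$-mod, it sets $Q=P\otimes V$ and constructs morphisms $A\in\Hom_{H_1}(P\otimes P^*,\,Q\otimes Q^*)$ and $B\in\Hom_{H_1}(Q\otimes Q^*,\,P\otimes P^*)$ out of the pivotal duality, with $B$ encoding $f$. Since duals of projectives are projective, the already-established projective--projective case applies to both $B\circ A$ and $A\circ B$, giving $\mtr^1_{P\otimes P^*}(BA)=\mtr^\alpha_P(\tr^r_{P^*}(BA))=\mtr^\alpha_P(\tr^r_V(f))$ and $\mtr^1_{Q\otimes Q^*}(AB)=\mtr^{\alpha\beta}_Q(\tr^r_{Q^*}(AB))=\mtr^{\alpha\beta}_{P\otimes V}(f)$; cyclicity $\mtr^1(BA)=\mtr^1(AB)$ then finishes. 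This argument never needs to split a non-projective $W$ off a free module, which is exactly the step your approach cannot perform.
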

\begin{proof}
The proof strictly follows the line of Reduction Lemma 3.2 \cite{BBG17}. 
The necessity is obvious. We now prove the sufficiency of the condition. By Proposition \ref{pro xd cyclic trace tu sym linear} for each $\alpha\in G$ the symmetric linear form $\lambda^{\alpha}$ induces the cyclic trace $\{\mtr_{P}^{\alpha}:\ \End_{H_{\alpha}}(P)\rightarrow \Bbbk\}_{P\in H_{\alpha}\text{-pmod}}$. We then prove that the cyclic trace $\mtr^{\alpha}$ satisfies the right partial trace property.\\
First, let $P \in H_{\alpha}\text{-pmod},\ P'\in H_{\beta}\text{-pmod}$ and $f\in \End_{H_{\alpha\beta}}(P\otimes P')$. Suppose that $\Id_{P}$ and $\Id_{P'}$ have the decomposition
\begin{equation}\label{pt idP}
\Id_{P}=\sum a_i\circ b_i, \qquad \Id_{P'}=\sum a_{i^{'}}\circ b_{i^{'}}
\end{equation}
where $a_i:\ H_{\alpha}\rightarrow P,\ b_i:\ P \rightarrow H_{\alpha}$ and $a_{i^{'}}:\ H_{\beta}\rightarrow P',\ b_{i^{'}}:\ P'\rightarrow  H_{\beta}$. The modified trace of $f$ is calculated as follows:
\begin{align}\label{ptrace voi hai projec module}
\mtr_{P\otimes P'}^{\alpha\beta}(f)
&=\mtr_{P\otimes P'}^{\alpha\beta}\left(\epsh{fig454}{12ex} \right)\put(-34,13){\ms{f}} \put(-46, -4){\ms{a_i}} \put(-45, -19){\ms{b_i}} \put(-29, -3){\ms{a_{i^{'}}}} \put(-28, -19){\ms{b_{i^{'}}}}
=\mtr_{H_{\alpha}\otimes H_{\beta}}^{\alpha\beta}\left(\epsh{fig455}{12ex} \right) \put(-36,2){\ms{f}} \put(-48, -16){\ms{a_i}} \put(-29, -14){\ms{a_{i^{'}}}} \put(-48, 20){\ms{b_i}} \put(-29, 21){\ms{b_{i^{'}}}}\\
&=\mtr_{H_{\alpha}}^{\alpha}\left(\epsh{fig456}{14ex} \right) \put(-48,2){\ms{f}} \put(-59, -16){\ms{a_i}} \put(-41, -15){\ms{a_{i^{'}}}} \put(-59, 20){\ms{b_i}} \put(-41, 22){\ms{b_{i^{'}}}}
= \mtr_{P}^{\alpha}\left(\epsh{fig457}{14ex} \right)\put(-48,2){\ms{f}}\nonumber\\
&=\mtr_{P}^{\alpha}\left(\tr_{P'}^{r}(f) \right). \nonumber
\end{align}
In this calculation, one uses \eqref{pt idP} in the first equality, in the second equality one uses the cyclicity property of cyclic traces, the third equality thanks to \eqref{tc right trace tren pt sinh} and finally one uses the duality morphisms to move $b_{i'}$ around the loop then applying again \eqref{pt idP} and the cyclicity property.\\
Second, let $P \in H_{\alpha}\text{-pmod},\ V\in H_{\beta}\text{-mod}$ and $f\in \End_{H_{\alpha\beta}}(P\otimes V)$. Set $Q=P\otimes V$, note that $Q\in H_{\alpha\beta}\text{-pmod}$ and $P\otimes P^{*}, \ Q\otimes Q^{*}\in H_1\text{-pmod} $.
Consider two morphisms $A\in \Hom_{H_{1}\text{-mod}}(P \otimes P^*, Q\otimes Q^*)$ and $B\in \Hom_{H_{1}\text{-mod}}(Q\otimes Q^*, P \otimes P^*)$ are given by
\begin{equation*}
A=\epsh{fig458}{8ex}\put(-34,-26){\ms{P}} \put(-2,-26){\ms{P}} \put(-34,28){\ms{Q}} \put(-2,28){\ms{Q}} \quad , \qquad B= \epsh{fig459}{20ex} \put(-45,58){\ms{P}} \put(-34,58){\ms{P}} \put(-60,-57){\ms{Q}} \put(-50,-57){\ms{Q}}\put(-34,23){\ms{f}} \put(-36,0){\ms{\Id}} \put(-36,-28){\ms{\Id}} \put(-18,0){\ms{V}}\ .
\end{equation*}
According to \eqref{ptrace voi hai projec module} one gets
\begin{align*}
\mtr_{P\otimes P^{*}}^{1}(B\circ A)
&=\mtr_{P}^{\alpha}\left(\tr_{P^{*}}^{r}(B\circ A)\right)\\
&=\mtr_{P}^{\alpha}\left(\epsh{fig460}{24ex}\right) \put(-59,26){\ms{f}} \put(-61,5){\ms{\Id}} \put(-61,-22){\ms{\Id}} \put(-45,-4){\ms{V}} \put(-80,-4){\ms{Q}} \put(-76,54){\ms{P}} \put(-79,-60){\ms{P}}
=\mtr_{P}^{\alpha}\left(\epsh{fig457}{14ex}\right) \put(-47,3){\ms{f}} \put(-58,42){\ms{P}} \put(-58,-42){\ms{P}} \put(-34,43){\ms{V}}
=\mtr_{P}^{\alpha}\left(\tr_{V}^{r}(f)\right).
\end{align*}
In above calculation, one applies the definition of the partial trace in second equality, in the third equality one uses the properties of the pivotal structure. Similarly we also have 
\begin{align*}
\mtr_{Q\otimes Q^{*}}^{1}(A\circ B)
&=\mtr_{Q}^{\alpha\beta}\left(\tr_{Q^{*}}^{r}(A\circ B)\right)\\
&=\mtr_{Q}^{\alpha\beta}\left(\epsh{fig461}{24ex}\right) \put(-56,17){\ms{f}} \put(-57,-3){\ms{\Id}} \put(-57,-28){\ms{\Id}} \put(-43,-16){\ms{V}}  \put(-70,60){\ms{Q}} \put(-78,-66){\ms{Q}} \put(-70,40){\ms{P}}
=\mtr_{Q}^{\alpha\beta}\left(\epsh{fig462}{14ex}\right)\put(-36,2){\ms{f}} \put(-34,42){\ms{Q}} \put(-34,-42){\ms{Q}}
=\mtr_{P\otimes V}^{\alpha\beta}(f).
\end{align*}
Since the cyclicity property $\mtr_{P\otimes P^{*}}^{1}(B\circ A)=\mtr_{Q\otimes Q^{*}}^{1}(A\circ B)$, it follows that $\mtr_{P\otimes V}^{\alpha\beta}(f)=\mtr_{P}^{\alpha}\left(\tr_{V}^{r}(f)\right)$.\\
The proof in the case of the left modified trace is similar.
\end{proof}
\subsection{Applications of Theorem \ref{main theorem}}
Theorem \ref{main theorem} has two immediate consequences when $G=\{1\}$ or $H$ is semi-simple. First, in degree $1$ the symmetrised $G$-integral is also the symmetrised integral of $H_1$ and Theorem \ref{main theorem} recovers the main theorem of \cite{BBG17} that we recall here:
\begin{The}[\cite{BBG17}]\label{Theorem of BBG}
Let $(H,g)$ be a finite dimensional unimodular pivotal Hopf algebra over a field $\Bbbk$. Then the space of right (left) modified traces on $H$-pmod is equal to the space of symmetrised right (left) integrals, and hence is 1-dimensional. Moreover, the right modified trace on $H$-pmod is non-degenerate and determined by the cyclicity property and by
\begin{equation*}
\mtr_{H}(f) = \mu(gf(1)) \ \text{
for any}\  f\in\End_{H}(H)\ .
\end{equation*}
Similarly, the left modified trace is non degenerate and determined by
\begin{equation*}
\mtr_{H}(f) = \mu^{l}(g^{-1}f(1)) \ \text{
for any}\  f\in\End_{H}(H)\ .
 \end{equation*}
In particular, $H$ is unibalanced if and only if the right modified trace is also left. 
\end{The}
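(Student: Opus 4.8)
The plan is to derive this statement as the specialization of Theorem \ref{main theorem} to the trivial group $G=\{1\}$, and then to read off the explicit formulas and the non-degeneracy from the preparatory results of Section \ref{section trace}. First I would observe that a finite dimensional unimodular pivotal Hopf algebra $(H,g)$ is precisely a finite type unimodular pivotal Hopf $G$-coalgebra with $G=\{1\}$: the sole component is $H_1=H$, the map $\Delta_{1,1}$ is the ordinary coproduct, and the pivot reduces to a single pivotal element $g=g_1$. Under this identification the defining relation \eqref{right int} becomes the usual right-integral equation, and the symmetrised right $G$-integral $\widetilde{\mu}_\alpha(x)=\mu_\alpha(g_\alpha x)$ collapses to $\widetilde{\mu}(x)=\mu(gx)$, the symmetrised integral in the sense of \cite{BBG17}; likewise \eqref{left int} gives $\widetilde{\mu}^{l}(x)=\mu^{l}(g^{-1}x)$. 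The category $H$-mod is just $H_1$-mod and $H$-pmod its ideal of projectives. I would also note that this derivation is not circular, since the proof of Theorem \ref{main theorem} invokes only auxiliary results of \cite{BBG17}, never its main theorem, which is exactly the statement being recovered here.

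Next, applying Theorem \ref{main theorem} with $G=\{1\}$ yields a bijection $\mtr\mapsto\lambda^{\mtr}$, $\lambda^{\mtr}(h)=\mtr_H(R_h)$, between the space of right modified traces on $H$-pmod and the space of symmetrised right integrals; since the space of right integrals is one-dimensional, both spaces are one-dimensional, and symmetrically on the left. For the explicit formula I would use Lemma \ref{ds-pt}: every $f\in\End_H(H)$ equals $R_{f(1)}$, so
\begin{equation*}
\mtr_H(f)=\mtr_H(R_{f(1)})=\lambda^{\mtr}(f(1))=\widetilde{\mu}(f(1))=\mu(gf(1)),
\end{equation*}
and, on the left, $\mtr_H(f)=\widetilde{\mu}^{l}(f(1))=\mu^{l}(g^{-1}f(1))$. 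That this value together with cyclicity determines $\mtr$ on all of $H$-pmod is precisely Proposition \ref{pro xd cyclic trace tu sym linear} together with \eqref{bt xd trace}.

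For non-degeneracy I would invoke Proposition \ref{sr int sym}: unimodularity of $H$ forces the symmetrised integral $\widetilde{\mu}=\lambda^{\mtr}$ to be a symmetric non-degenerate form on $H$, and by the equivalence recorded right after \eqref{bt xd trace} the non-degeneracy of $\lambda^{\mtr}$ is equivalent to that of the pairings \eqref{trace non-dege} induced by $\mtr$; hence $\mtr$ is non-degenerate. The unibalancedness clause is the $G=\{1\}$ case of the last assertion of Theorem \ref{main theorem}, namely $\widetilde{\mu}=\widetilde{\mu}^{l}$ if and only if the right modified trace coincides with the left one, which by the unimodular characterization $a=g^2$ is the classical notion of unibalancedness. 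I do not anticipate a genuine obstacle, as all the weight is carried by Theorem \ref{main theorem} and Section \ref{section trace}; the only delicate point is bookkeeping the identification $G=\{1\}$, checking in particular that \eqref{s right int} reduces to the correct symmetrised-integral relation and that the duality morphisms \eqref{left dual}--\eqref{right dual} specialize to those of \cite{BBG17}. Were one to want a self-contained proof, the real content would sit in the Reduction Lemma \ref{hq chinh}, which collapses the right partial-trace property to a single identity on $\End_H(H\otimes H)$; translating that identity into the defining relation of the symmetrised integral is the step demanding the most care.
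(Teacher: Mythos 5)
Your proposal takes essentially the same route as the paper, which presents Theorem \ref{Theorem of BBG} precisely as the $G=\{1\}$ specialization of Theorem \ref{main theorem}, the symmetrised $G$-integral in degree $1$ reducing to the symmetrised integral of $H_1$. Your additional bookkeeping is correct and consistent with the paper's cited results: the explicit formula via Lemma \ref{ds-pt}, determination by cyclicity via Proposition \ref{pro xd cyclic trace tu sym linear} and \eqref{bt xd trace}, non-degeneracy via Proposition \ref{sr int sym} together with the equivalence noted after \eqref{bt xd trace}, and your non-circularity observation holds since the paper's direct proof of Theorem \ref{main theorem} uses only auxiliary results of \cite{BBG17}, not its main theorem.
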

Second, for a finite type unimodular pivotal Hopf $G$-coalgebra $(H,g)$, if $H$ is semi-simple, i.e. $H_{\alpha}$ is semi-simple for all $\alpha\in G$ then $H\text{-pmod}=\mathcal{C}$. Then the categorical trace generates the space of modified traces on $H\text{-pmod}$: for any $f\in \End_{\mathcal{C}}(V)$, the right and left categorical trace are
\begin{align*}
&\tr_{V}^{\mathcal{C}}(f):=\tev_{V} (f\otimes \Id_{V}) \coev_{V}\in \Bbbk,\\
&^{\mathcal{C}}\tr_{V}(f):=\ev_{V} (\Id_{V^{*}}\otimes f) \tcoev_{V}\in \Bbbk.
\end{align*}
As a corollary of Theorem \ref{main theorem} we then have the proposition.
\begin{Pro}
Let $(H,g)$ be a finite type unimodular pivotal Hopf $G$-coalgebra over a field $\Bbbk$. The right categorical trace $\tr_{H_{\alpha}}^{\mathcal{C}}$ and its left version $^{\mathcal{C}}\tr_{H_{\alpha}}$ are non-zero if and only if $H_{\alpha}$-mod is semi-simple and in this case coincide up to a scalar with the trace maps
\begin{equation*}
f\mapsto \widetilde{\mu}_{\alpha}(f(1_{\alpha}))\ \text{and}\ f \mapsto \widetilde{\mu}_{\alpha}^{l}(f(1_{\alpha}))
\end{equation*}
respectively, where $f\in \End_{H_{\alpha}}(H_{\alpha})$.
\end{Pro}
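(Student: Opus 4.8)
The plan is to identify the two categorical traces with modified traces and then quote Theorem \ref{main theorem}. Since the left and right duality morphisms of a tensor product factor through those of the tensor factors, the right categorical trace satisfies $\tr^{\mathcal{C}}_{X\otimes Y}(f)=\tr^{\mathcal{C}}_{X}(\tr_{Y}^{r}(f))$ for all $X,Y\in\mathcal{C}$ and all $f\in\End_{\mathcal{C}}(X\otimes Y)$, and it is cyclic; hence its restriction to $\textbf{Proj}(\mathcal{C})\cap\mathcal{C}_{\alpha}=H_{\alpha}$-pmod is a right modified trace, and symmetrically $^{\mathcal{C}}\tr$ restricts to a left modified trace. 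By Theorem \ref{main theorem} the space of right modified traces on $H_{\alpha}$-pmod is one–dimensional and generated by $f\mapsto\widetilde{\mu}_{\alpha}(f(1_{\alpha}))$, so there is a scalar $c_{\alpha}\in\Bbbk$ with
\[
\tr^{\mathcal{C}}_{H_{\alpha}}(f)=c_{\alpha}\,\widetilde{\mu}_{\alpha}(f(1_{\alpha})),\qquad f\in\End_{H_{\alpha}}(H_{\alpha}),
\]
and likewise $^{\mathcal{C}}\tr_{H_{\alpha}}(f)=c'_{\alpha}\,\widetilde{\mu}^{l}_{\alpha}(f(1_{\alpha}))$. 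This already yields the asserted proportionality as soon as the categorical trace is non-zero, so the entire statement reduces to proving that $c_{\alpha}\neq0$ (equivalently $c'_{\alpha}\neq0$) if and only if $H_{\alpha}$-mod is semisimple.

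Next I would make $c_{\alpha}$ explicit. Unwinding the definitions of $\tev$ and $\coev$ in \eqref{right dual} shows that for $f\in\End_{H_{\alpha}}(H_{\alpha})$ one has $\tr^{\mathcal{C}}_{H_{\alpha}}(f)=\tr_{\Bbbk}\!\big(\rho_{H_{\alpha}}(g_{\alpha})\circ f\big)$, where $\rho_{H_{\alpha}}(g_{\alpha})=L_{g_{\alpha}}$ is left multiplication by the pivot on the regular module; in particular $\lambda^{\mathcal{C}}_{\alpha}(h):=\tr^{\mathcal{C}}_{H_{\alpha}}(R_{h})=\tr_{\Bbbk}(L_{g_{\alpha}}R_{h})$. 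The symmetric bilinear form attached to $\lambda^{\mathcal{C}}_{\alpha}$ is $(x,y)\mapsto\tr_{\Bbbk}(L_{g_{\alpha}}R_{xy})$, i.e. the ordinary trace form of the algebra $H_{\alpha}$ twisted by the invertible element $g_{\alpha}$. Because $\widetilde{\mu}_{\alpha}$ is non-degenerate by Proposition \ref{sr int sym} and $\lambda^{\mathcal{C}}_{\alpha}=c_{\alpha}\widetilde{\mu}_{\alpha}$, the scalar $c_{\alpha}$ is non-zero exactly when this twisted trace form is non-degenerate.

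The crux — and the step I expect to be the main obstacle — is therefore the classical-looking equivalence that the twisted trace form $(x,y)\mapsto\tr_{\Bbbk}(L_{g_{\alpha}}R_{xy})$ is non-degenerate if and only if the finite–dimensional algebra $H_{\alpha}$ is semisimple. The non-semisimple direction is the delicate one, but can be handled directly: if $H_{\alpha}$ is not semisimple its Jacobson radical $J$ is a non-zero nilpotent two-sided ideal, and for $w\in J$ the operator $L_{g_{\alpha}}R_{w}\colon z\mapsto g_{\alpha}zw$ satisfies $(L_{g_{\alpha}}R_{w})^{k}(z)=g_{\alpha}^{k}z\,w^{k}$, hence is nilpotent with vanishing trace; since $J$ is an ideal this gives $\tr_{\Bbbk}(L_{g_{\alpha}}R_{xy})=0$ for all $x\in J$ and all $y$, so $J$ lies in the radical of the form and $c_{\alpha}=0$. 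For the converse I would invoke that a finite–dimensional semisimple algebra has non-degenerate trace form (equivalently, that in the semisimple case $\mathcal{C}_{\alpha}=H_{\alpha}$-pmod and the categorical pairing $\Hom_{\mathcal{C}}(M,P)\times\Hom_{\mathcal{C}}(P,M)\to\Bbbk$, $(f,g)\mapsto\tr^{\mathcal{C}}_{P}(fg)$, is non-degenerate), which forces $\lambda^{\mathcal{C}}_{\alpha}$ to be non-degenerate and $c_{\alpha}\neq0$. The same argument applied to $^{\mathcal{C}}\tr$ with $g_{\alpha}^{-1}$ in place of $g_{\alpha}$ settles the left version, and on the semisimple locus the resulting identities are precisely the claimed coincidences up to a scalar.
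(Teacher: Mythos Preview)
The paper gives no proof of this proposition; it is stated as a direct corollary of Theorem~\ref{main theorem}, the implicit reasoning being that the categorical trace is itself a right (resp.\ left) modified trace on $\textbf{Proj}(\mathcal{C})$ and hence, by the theorem, a scalar multiple of the trace built from the symmetrised $G$-integral. Your overall strategy is exactly this, and you supply details the paper omits, so the approaches coincide in spirit.

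Two points deserve comment. First, a minor framing issue: modified traces live on $\textbf{Proj}(\mathcal{C})$, not on a single $H_\alpha$-pmod (which is not a tensor ideal in a pivotal category), so Theorem~\ref{main theorem} yields a \emph{single} proportionality constant $c$ rather than a family $(c_\alpha)_{\alpha\in G}$. Your Jacobson-radical computation is correct and shows that if some $H_\alpha$ fails to be semisimple then this global $c$ vanishes.

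Second, the converse you sketch does not quite close. The form $(x,y)\mapsto\tr_\Bbbk(L_{g_\alpha}R_{xy})$ is not the ordinary trace form of $H_\alpha$ but a pivot-twisted one; on a matrix block $M_n(\Bbbk)$ it reads $\tr(g_\alpha|_V)\cdot\tr(h|_V)$, which vanishes identically precisely when the simple module $V$ has quantum dimension zero. So semisimplicity of $H_\alpha$ alone does not force non-degeneracy of this form, and the standard ``semisimple $\Rightarrow$ non-degenerate trace form'' statement you invoke does not apply. A cleaner route: when $H$ is semisimple the unit $\mathbb{I}\in H_1$-mod is projective, and $\tr^{\mathcal{C}}_{\mathbb{I}}(\Id_{\mathbb{I}})=\varepsilon(g_1)=1\neq 0$; hence the categorical trace is non-zero on $\textbf{Proj}(\mathcal{C})$ and $c\neq 0$. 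This is presumably what the paper has in mind, given the sentence preceding the proposition.
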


\section{Proof of the main theorem}
\subsection{Decomposition of tensor products of the regular representations}
We denote by $H_{\alpha}$ the left $H_{\alpha}$-module given by the left regular action.
Let us denote by $_{\varepsilon}H_{\beta}$ the vector space underlying $H_{\beta}$ equipped with the $H_{1}$-module structure given by
\begin{equation*}
 h.m=\varepsilon(h)m\ \text{for}\ m\in \  _{\varepsilon}H_{\beta},\ h\in H_{1}.
\end{equation*} 
We will use Sweedler's notation: $\Delta_{\alpha, \beta}(h)=h_{(1)}\otimes h_{(2)}$ for $h\in H_{\alpha\beta},\ h_{(1)} \in H_{\alpha}, h_{(2)} \in H_{\beta}$.
\begin{The}\label{dl phi xi}
Let $H=(H_{\alpha})_{\alpha\in G}$ be a finite type Hopf $G$-coalgebra. Then
\begin{enumerate}
\item[(1)] the map 
\begin{equation*}
\begin{array}{rcl}
\phi_{\alpha, \beta}:\ H_{\alpha\beta} \otimes\ _{\varepsilon}H_{\beta} &\rightarrow &H_{\alpha} \otimes H_{\beta}\\
h\otimes m &\mapsto &h_{(1)}\otimes h_{(2)}m
\end{array}
\end{equation*}
is an isomorphism of $H_{\alpha\beta}$-modules whose inverse is
\begin{equation*}
\begin{array}{rcl}
\psi_{\alpha, \beta}:\ H_{\alpha} \otimes H_{\beta} & \rightarrow &H_{\alpha\beta} \otimes\ _{\varepsilon}H_{\beta}\\
x \otimes y &\mapsto &x_{(1)}\otimes S_{\beta^{-1}}(x_{(2)})y.
\end{array}
\end{equation*}
\item[(2)] the map 
\begin{equation*}
\begin{array}{rcl}
\phi_{\alpha, \beta}^{l}:\ \ _{\varepsilon}H_{\alpha} \otimes  H_{\alpha\beta}  &\rightarrow &H_{\alpha} \otimes H_{\beta}\\
m\otimes h &\mapsto &h_{(1)}m\otimes h_{(2)}
\end{array}
\end{equation*}
is an isomorphism of $H_{\alpha\beta}$-modules whose inverse is
\begin{equation*}
\begin{array}{rcl}
\psi_{\alpha, \beta}^{l}:\ H_{\alpha} \otimes H_{\beta} & \rightarrow &\ _{\varepsilon}H_{\alpha} \otimes  H_{\alpha\beta}\\
x \otimes y &\mapsto &S_{\alpha^{-1}}^{-1}(y_{(1)})x \otimes y_{(2)}.
\end{array}
\end{equation*}
\end{enumerate}
\end{The}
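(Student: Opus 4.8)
The plan is to verify that $\phi_{\alpha,\beta}$ and $\psi_{\alpha,\beta}$ are mutually inverse $H_{\alpha\beta}$-linear maps, and then to obtain part (2) by a symmetric argument. First I would check that both maps are $H_{\alpha\beta}$-module homomorphisms. Recall that $H_{\alpha\beta}$ acts on $H_{\alpha}\otimes H_{\beta}$ through the coproduct $\Delta_{\alpha,\beta}$, that it acts on the left tensor factor $H_{\alpha\beta}$ of the source by left multiplication, and that it acts trivially (via $\varepsilon$) on the factor $_{\varepsilon}H_{\beta}$. Thus for $z\in H_{\alpha\beta}$ the action on $h\otimes m$ in the source is $zh\otimes m$, while on $\phi_{\alpha,\beta}(h\otimes m)=h_{(1)}\otimes h_{(2)}m$ it is $z_{(1)}h_{(1)}\otimes z_{(2)}h_{(2)}m$. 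Since $\Delta_{\alpha,\beta}$ is an algebra homomorphism, $\Delta_{\alpha,\beta}(zh)=z_{(1)}h_{(1)}\otimes z_{(2)}h_{(2)}$, so $\phi_{\alpha,\beta}(zh\otimes m)=z\cdot\phi_{\alpha,\beta}(h\otimes m)$, giving $H_{\alpha\beta}$-linearity. The linearity of $\psi_{\alpha,\beta}$ is checked similarly, using that $\varepsilon$ extracts the scalar on the $_{\varepsilon}H_{\beta}$ factor.

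Next I would verify the two composites are identities, which is the computational heart of the argument. For $\psi_{\alpha,\beta}\circ\phi_{\alpha,\beta}$, starting from $h\otimes m$ I compute $\phi_{\alpha,\beta}(h\otimes m)=h_{(1)}\otimes h_{(2)}m$ and then apply $\psi_{\alpha,\beta}$, which after coassociativity (splitting $h_{(1)}$ into $h_{(1)}\otimes h_{(2)}$ and relabelling $h_{(2)}m$ as the third leg $h_{(3)}m$) produces
\begin{equation*}
h_{(1)}\otimes S_{\beta^{-1}}(h_{(2)})h_{(3)}m.
\end{equation*}
The key step is the antipode axiom $m_{\beta}(S_{\beta^{-1}}\otimes\Id_{H_{\beta}})\Delta_{\beta^{-1},\beta}=\varepsilon 1_{\beta}$ applied to the two middle legs, which collapses $S_{\beta^{-1}}(h_{(2)})h_{(3)}$ to $\varepsilon(h_{(2)})1_{\beta}$; the counit axiom then gives $h_{(1)}\varepsilon(h_{(2)})=h$, so the composite returns $h\otimes m$ as desired. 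For $\phi_{\alpha,\beta}\circ\psi_{\alpha,\beta}$ one runs the analogous computation on $x\otimes y$: apply $\psi_{\alpha,\beta}$ to get $x_{(1)}\otimes S_{\beta^{-1}}(x_{(2)})y$, then $\phi_{\alpha,\beta}$ expands $x_{(1)}$ and multiplies, and after coassociativity the antipode cancellation $x_{(2)}S_{\beta^{-1}}(x_{(3)})=\varepsilon(x_{(2)})1_{\beta}$ (this time the \emph{other} antipode axiom $m_{\beta}(\Id\otimes S_{\beta^{-1}})\Delta_{\beta,\beta^{-1}}=\varepsilon 1_{\beta}$) together with the counit recovers $x\otimes y$.

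The main obstacle I anticipate is purely bookkeeping rather than conceptual: one must keep careful track of which factor each Sweedler leg lives in, since the coproduct here is the graded $\Delta_{\alpha,\beta}:H_{\alpha\beta}\to H_{\alpha}\otimes H_{\beta}$ and the antipode $S_{\beta^{-1}}:H_{\beta^{-1}}\to H_{\beta}$ changes grading, so the indices $\alpha,\beta,\alpha^{-1},\beta^{-1}$ must be threaded correctly through coassociativity and through Lemma \ref{tc antip}. It is cleanest to carry out both composite verifications diagrammatically using the graphical calculus of Section \ref{the graphical representation}, where the antipode axioms of Figure \ref{Fig antipode axioms} and the counit axioms of Figure \ref{Fig counity axioms} appear as local moves that straighten the relevant strands. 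Finally, part (2) follows by the same method with left and right factors interchanged: $\phi^{l}_{\alpha,\beta}$ and $\psi^{l}_{\alpha,\beta}$ are checked to be $H_{\alpha\beta}$-linear and mutually inverse, the only differences being that the antipode $S_{\alpha^{-1}}$ now acts on the first factor and its inverse $S_{\alpha^{-1}}^{-1}$ appears in $\psi^{l}_{\alpha,\beta}$ (which exists because $H$ is of finite type, hence $S$ is bijective). This finishes the proof.
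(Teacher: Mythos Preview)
Your proposal is correct and follows essentially the same approach as the paper: both verify that $\phi_{\alpha,\beta}$ and $\psi_{\alpha,\beta}$ are $H_{\alpha\beta}$-linear mutual inverses via coassociativity, the antipode axiom, and the counit axiom, with part (2) handled by symmetry. The only cosmetic difference is that the paper carries out the computations in the graphical calculus (which you yourself suggest as the cleanest option), and checks the inverse identity before linearity rather than after.
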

We prove the theorem using graphical calculus with the graphical representations for Hopf $G$-coalgebras given in Section \ref{the graphical representation}. The maps $\phi_{\alpha, \beta}$ and $\psi_{\alpha, \beta}$ are presented in Figure \ref{Fig phi va psi}.
\begin{figure}
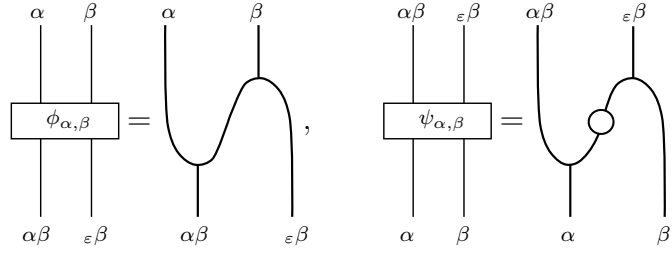

$$\epsh{fig425phi}{14ex}\put(-27,2){\ms{\phi_{\alpha,\beta}}}\put(-36,-42){\ms{\alpha\beta}} \put(-33,42){\ms{\alpha}} \put(-13,42){\ms{\beta}} \put(-13,-42){\ms{_\varepsilon\beta}}
=\epsh{fig427}{14ex}\put(-42,-42){\ms{\alpha\beta}} \put(-49,42){\ms{\alpha}} \put(-16,42){\ms{\beta}} \put(-3,-42){\ms{_\varepsilon\beta}}
\ ,\qquad  \epsh{fig425phi}{14ex}\put(-27,2){\ms{\psi_{\alpha,\beta}}}\put(-34,-42){\ms{\alpha}} \put(-35,42){\ms{\alpha\beta}} \put(-13,42){\ms{_\varepsilon\beta}} \put(-13,-42){\ms{\beta}}=\epsh{fig426}{14ex}\put(-39,-42){\ms{\alpha}} \put(-52,42){\ms{\alpha\beta}} \put(-16,42){\ms{_\varepsilon\beta}} \put(-3,-42){\ms{\beta}}$$
\caption{The graphical representations of $\phi_{\alpha, \beta}$ and $\psi_{\alpha, \beta}$}\label{Fig phi va psi}
\end{figure}
The graphical representations for $\phi_{\alpha, \beta}^{l}$ and $\psi_{\alpha, \beta}^{l}$ are similar.
\begin{proof}
In order to prove part (1), we first check that $\phi_{\alpha, \beta}$ is left inverse to $\psi_{\alpha, \beta}$, by computing the composition one gets
\begin{equation*}
\psi_{\alpha, \beta}\circ \phi_{\alpha, \beta}
=\epsh{fig441}{16ex} \put(-28,-46){\ms{\alpha\beta}} \put(-6,-46){\ms{_\varepsilon\beta}} \put(-42,48){\ms{\alpha\beta}} \put(-18,48){\ms{_\varepsilon\beta}} 
\put(-36, -20){\ms{\alpha}} \put(-22, -20){\ms{\beta}} \ 
=\ \epsh{fig442}{16ex} \put(-28,-46){\ms{\alpha\beta}} \put(-6,-46){\ms{_\varepsilon\beta}} \put(-44,48){\ms{\alpha\beta}} \put(-14,48){\ms{_\varepsilon\beta}} \put(-37, -20){\ms{\alpha}} \put(-23, -20){\ms{\beta}} \ 
=\ \epsh{fig443}{16ex} \put(-28,-46){\ms{\alpha\beta}} \put(-6,-46){\ms{_\varepsilon\beta}} \put(-38,48){\ms{\alpha\beta}} \put(-15,48){\ms{_\varepsilon\beta}} \put(-44, -20){\ms{\alpha\beta}} \put(-23, -20){\ms{1}} \ 
=\ \epsh{fig444}{16ex} \put(-28,-46){\ms{\alpha\beta}} \put(-6,-46){\ms{_\varepsilon\beta}} \put(-38,48){\ms{\alpha\beta}} \put(-15,48){\ms{_\varepsilon\beta}} \put(-23, -20){\ms{1}} \ 
= \Id_{H_{\alpha\beta}\otimes\ _\varepsilon H_{\beta}}
\end{equation*} 
where the associativity of the product $m_\beta$ is used in the first equality, then we use the coassociativity of the coproduct in the second equality, and finally we use the antipode properties in the last equality. Similarly we have $\phi_{\alpha, \beta}\circ \psi_{\alpha, \beta}=\Id_{H_{\alpha}\otimes H_{\beta}}$.\\
Next we prove the map $\phi_{\alpha, \beta}$ is $H_{\alpha\beta}$-linear by diagrammic calculus:
\begin{equation*}
\epsh{fig445}{14ex} \put(-27,2){\ms{\phi_{\alpha,\beta}}} \put(-52,-40){\ms{\alpha\beta}} \put(-34,-40){\ms{\alpha\beta}} \put(-14,-40){\ms{_\varepsilon\beta}} \put(-33,42){\ms{\alpha}} \put(-13,42){\ms{\beta}}\ 
=\ \epsh{fig446}{14ex} \put(-54,-40){\ms{\alpha\beta}} \put(-39,-40){\ms{\alpha\beta}} \put(-4,-40){\ms{_\varepsilon\beta}} \put(-50,42){\ms{\alpha}} \put(-15,42){\ms{\beta}}\ 
=\ \epsh{fig447}{14ex} \put(-55,-40){\ms{\alpha\beta}} \put(-35,-40){\ms{\alpha\beta}} \put(-4,-40){\ms{_\varepsilon\beta}} \put(-44,42){\ms{\alpha}} \put(-14,42){\ms{\beta}}\ 
=\ \epsh{fig448}{14ex} \put(-60,-40){\ms{\alpha\beta}} \put(-34,-40){\ms{\alpha\beta}} \put(-14,-40){\ms{_\varepsilon\beta}} \put(-27,2){\ms{\phi_{\alpha,\beta}}} \put(-33,42){\ms{\alpha}} \put(-13,42){\ms{\beta}}
\end{equation*}
where we used the property of the algebra homomorphism $\Delta_{\alpha, \beta}$ in the second equality and the associativity of multiplication in the third equality. The map $\psi_{\alpha, \beta}$ is also $H_{\alpha\beta}$-linear by:
\begin{equation*}
\epsh{fig449}{14ex} \put(-27,17){\ms{\psi_{\alpha,\beta}}} \put(-52,-40){\ms{\alpha\beta}} \put(-30,-40){\ms{\alpha}} \put(-12,-40){\ms{\beta}} \put(-32,42){\ms{\alpha\beta}} \put(-12,42){\ms{_\varepsilon\beta}}
\ =\ \epsh{fig450}{14ex} \put(-40,-40){\ms{\alpha\beta}} \put(-22,-40){\ms{\alpha}} \put(0,-40){\ms{\beta}} \put(-32,42){\ms{\alpha\beta}} \put(-12,42){\ms{_\varepsilon\beta}}
\ =\ \epsh{fig451}{14ex} \put(-48,-40){\ms{\alpha\beta}} \put(-22,-40){\ms{\alpha}} \put(0,-40){\ms{\beta}} \put(-32,42){\ms{\alpha\beta}} \put(-12,42){\ms{_\varepsilon\beta}}
\ =\ \epsh{fig452}{14ex} \put(-48,-40){\ms{\alpha\beta}} \put(-22,-40){\ms{\alpha}} \put(0,-40){\ms{\beta}} \put(-32,42){\ms{\alpha\beta}} \put(-12,42){\ms{_\varepsilon\beta}}
\ =\ \epsh{fig453}{14ex} \put(-52,-40){\ms{\alpha\beta}} \put(-28,-40){\ms{\alpha}} \put(-10,-40){\ms{\beta}} \put(-32,42){\ms{\alpha\beta}} \put(-12,42){\ms{_\varepsilon\beta}} \put(-27,2){\ms{\psi_{\alpha,\beta}}}
\end{equation*}
where we used the property of the algebra homomorphism $\Delta_{\alpha, \beta}$ in the first equality, the coassociativity of coproduct and the antipode properties are used in the second equality, the associativity of multiplication and the antipode properties are used in the third equality, and we used the antipode properties in the last equality.

The proof of the part (2) is similar way.
\end{proof}
\begin{Pro}\label{rmq to prove main theorem}
Let $H=(H_{\alpha})_{\alpha\in G}$ be a finite type pivotal Hopf $G$-coalgebra. Then we have the equalities of linear maps:
\begin{enumerate}
\item[(1)] $\phi_{\alpha, \beta}(1_{\alpha\beta}\otimes m)=1_\alpha\otimes m$ for $m\in \ _\varepsilon H_\beta$,
\item[(2)] $(\widetilde{\mu}_{\alpha\beta}\otimes \Id'_{\beta})\circ \psi_{\alpha, \beta}=\widetilde{\mu}_{\alpha}\otimes g_{\beta}\Id'_{\beta}$ where $\Id'_{\beta}:\ H_{\beta}\rightarrow\ _\varepsilon H_{\beta} $ is the identity map in $\Vect_{\Bbbk}$.
\end{enumerate}
\end{Pro}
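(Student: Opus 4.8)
The plan is to prove both identities by unwinding the Sweedler-notation definitions of $\phi_{\alpha,\beta}$ and $\psi_{\alpha,\beta}$ from Theorem \ref{dl phi xi} and applying the structural axioms of the pivotal Hopf $G$-coalgebra, exactly in the graphical style used in the proof of that theorem. For part (1), I would simply evaluate $\phi_{\alpha,\beta}$ on the element $1_{\alpha\beta}\otimes m$. By definition $\phi_{\alpha,\beta}(h\otimes m)=h_{(1)}\otimes h_{(2)}m$, so I compute $\Delta_{\alpha,\beta}(1_{\alpha\beta})=1_{\alpha}\otimes 1_{\beta}$, which holds because $\Delta_{\alpha,\beta}$ is an algebra homomorphism and hence unital. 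This gives $\phi_{\alpha,\beta}(1_{\alpha\beta}\otimes m)=1_{\alpha}\otimes 1_{\beta}m=1_{\alpha}\otimes m$, where the last step uses that $1_{\beta}$ is the unit of $H_{\beta}$ acting on $m\in\ _{\varepsilon}H_{\beta}$ as an underlying vector. This step is immediate.

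For part (2), I would start from the formula $\psi_{\alpha,\beta}(x\otimes y)=x_{(1)}\otimes S_{\beta^{-1}}(x_{(2)})y$ and compose with $\widetilde{\mu}_{\alpha\beta}\otimes\Id'_{\beta}$, obtaining
\begin{equation*}
(\widetilde{\mu}_{\alpha\beta}\otimes\Id'_{\beta})\circ\psi_{\alpha,\beta}(x\otimes y)=\widetilde{\mu}_{\alpha\beta}(x_{(1)})\,S_{\beta^{-1}}(x_{(2)})\,y.
\end{equation*}
The goal is to show this equals $\widetilde{\mu}_{\alpha}(x)\,g_{\beta}y$. The main tool will be the defining relation \eqref{s right int} for the symmetrised right $G$-integral, namely $(\widetilde{\mu}_{\alpha}\otimes g_{\beta})\Delta_{\alpha,\beta}(x)=\widetilde{\mu}_{\alpha\beta}(x)1_{\beta}$, which I would need to recognize inside the expression after suitable manipulation of the antipode. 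The natural approach is to apply \eqref{s right int} not directly to $x$ but after inserting the coproduct and using the antipode axiom $m_{\beta}(S_{\beta^{-1}}\otimes\Id_{H_{\beta}})\Delta_{\beta^{-1},\beta}$ to collapse a loop; concretely, I expect to use coassociativity to split the coproduct of $x$ into three legs, apply the symmetrised integral relation to the first two legs (producing $\widetilde{\mu}_{\alpha}(x)$ times $g_{\beta}$ via the $g_{\beta}$ appearing on the right-hand side of \eqref{s right int}), and then use an antipode cancellation on the remaining legs.

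The cleanest route is probably graphical, mirroring the diagrams in Figure \ref{presentation of right sym G-int} and Figure \ref{Fig phi va psi}: draw the composite, slide the $\widetilde{\mu}_{\alpha\beta}$ box using the symmetrised-integral identity to convert the coproduct plus integral into $\widetilde{\mu}_{\alpha}$ together with an emitted $g_{\beta}$, and then cancel the antipode strand against the unit $1_{\beta}$ produced on the right-hand side of \eqref{s right int}, using the antipode properties of Figure \ref{Fig antipode axioms}. The step I expect to be the main obstacle is matching the bookkeeping of the antipode strand $S_{\beta^{-1}}(x_{(2)})$ with the $1_{\beta}$-output of the symmetrised integral relation: one must verify that, after applying \eqref{s right int}, the antipode leg indeed closes up into a counit that evaluates to a scalar, leaving exactly $g_{\beta}$ rather than some other grouplike factor. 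This requires careful use of coassociativity together with the identity $m_{\beta}(\Id_{H_{\beta}}\otimes S_{\beta^{-1}})\Delta_{\beta,\beta^{-1}}=\varepsilon 1_{\beta}$, and the grouplike property $\Delta_{\alpha,\beta}(g_{\alpha\beta})=g_{\alpha}\otimes g_{\beta}$ to track where $g_{\beta}$ emerges. Once this cancellation is set up correctly, the identity follows, and the left-handed analogues for $\phi^{l}_{\alpha,\beta}$, $\psi^{l}_{\alpha,\beta}$ go through by the same argument with the roles of the two tensor factors and the symmetrised left $G$-integral relation \eqref{s left int} interchanged.
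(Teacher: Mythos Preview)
Your proof of part~(1) is exactly what the paper does. For part~(2), your plan is correct and will go through, but it takes a longer route than the paper's argument. The paper applies the symmetrised-integral relation \eqref{s right int} \emph{directly}, but with the index pair $(\alpha\beta,\beta^{-1})$ rather than $(\alpha,\beta)$: since $\psi_{\alpha,\beta}$ is built from the coproduct $\Delta_{\alpha\beta,\beta^{-1}}:H_\alpha\to H_{\alpha\beta}\otimes H_{\beta^{-1}}$, relation \eqref{s right int} at these indices gives $\widetilde{\mu}_{\alpha\beta}(x_{(1)})\,x_{(2)}=\widetilde{\mu}_\alpha(x)\,g_{\beta^{-1}}^{-1}$ in $H_{\beta^{-1}}$, and then applying $S_{\beta^{-1}}$ and using $S_{\beta^{-1}}(g_{\beta^{-1}}^{-1})=g_\beta$ (which follows from $g_\beta^{-1}=S_{\beta^{-1}}(g_{\beta^{-1}})$) yields the result in one stroke. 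Your three-leg strategy --- insert a factor $1_\beta$, rewrite $\widetilde{\mu}_{\alpha\beta}(x_{(1)})1_\beta$ via \eqref{s right int} as $\widetilde{\mu}_\alpha(x_{(1)(1)})\,g_\beta\,x_{(1)(2)}$, then use coassociativity and the antipode axiom to collapse $x_{(1)(2)}S_{\beta^{-1}}(x_{(2)})$ to a counit --- also works, but the ``main obstacle'' you anticipate simply does not arise in the paper's shorter route: there is no counit cancellation to perform, and $g_\beta$ appears immediately as the image under $S_{\beta^{-1}}$ of the inverse pivot in degree $\beta^{-1}$. The gain of the paper's route is brevity; the gain of yours is that it invokes \eqref{s right int} only at the literal indices $(\alpha,\beta)$ and so is more mechanical.
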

\begin{proof}
The equality (1) holds by the definition of the map $\phi_{\alpha, \beta}$. Part (2) follows from the diagrammic calculus in $\Vect_{\Bbbk}$:
\begin{equation}\label{equi sym right int}
\epsh{fig469a}{14ex}\put(-34,-2){\ms{\psi_{\alpha, \beta
}}} \put(-42, 26){\ms{\widetilde{\mu}_{\alpha\beta}}} \put(-38, -40){\ms{\alpha}} \put(-18, -40){\ms{\beta}} \put(-18, 30){\ms{_\varepsilon\beta}}
\ =\ \epsh{fig469b}{14ex}\put(-54, 2){\ms{\widetilde{\mu}_{\alpha\beta}}}
\ =\ \epsh{fig469c}{14ex}\put(-60, 2){\ms{\widetilde{\mu}_{\alpha}}}\put(-48,-29){\ms{g_{\beta^{-1}}^{-1}}}
\ =\ \epsh{fig469d}{14ex}\put(-60, 12){\ms{\widetilde{\mu}_{\alpha}}}\put(-36,-27){\ms{g_{\beta}}}
\end{equation}
where in the second equality of \eqref{equi sym right int} we used the relation of the right symmetrised $G$-integral in Figure \ref{presentation of right sym G-int}. 
\end{proof}
\subsection{Proof of Theorem \ref{main theorem}}
Let $(H,g)$ be a finite type unimodular pivotal Hopf $G$-coalgebra, $\mathcal{C}$ be the pivotal $G$-graded category of $H$-modules. The existence of modified trace on $\textbf{Proj}(\mathcal{C})$ follows from: 1) the existence of non-zero integral on $H_1$ 2) the existence of modified trace in $\mathcal{C}_1$ by applying the results of \cite{BBG17} for $H_1$ and 3) the existence of the extension of ambidextrous trace in \cite[Theorem 3.6]{GPV13}. Nevertheless we choose to give a direct proof of this fact following the lines of \cite{BBG17}. Furthermore, Theorem \ref{main theorem} also gives an explicit formula to compute the modified trace $\mtr$ from the integral and conversely.
\begin{proof}[Proof of Theorem \ref{main theorem}]
First, we show that a right symmetrised $G$-integral provides a modified trace. Suppose that $\widetilde{\mu}=(\widetilde{\mu}_{\alpha})_{\alpha\in G}$ is the right symmetrised $G$-integral for $H$.
By Proposition \ref{pro xd cyclic trace tu sym linear} the family of the symmetric forms associated with $\widetilde{\mu}$ induces the family of cyclic traces $\mtr=(\mtr^{\alpha})_{\alpha\in G}$ of $H$-pmod. Here $\mtr^{\alpha}=\{\mtr_P^{\alpha}:\ \End_{H_{\alpha}}(P)\rightarrow \Bbbk\}_{P\in H_{\alpha}\text{-pmod}}$
is determined by 
\begin{equation}\label{fomula determines trace}
\mtr_{H_{\alpha}}^{\alpha}(f)=\widetilde{\mu}_{\alpha}(f(1_\alpha))\ \text{for}\ f\in \End_{H_{\alpha}}(H_{\alpha}).
\end{equation}
To show $\mtr$ is a modified trace, it is enough to check 
\begin{equation}\label{X}
\mtr_{H_{\alpha}\otimes H_{\beta}}^{\alpha\beta}(f)=\mtr_{H_{\alpha}}^{\alpha}(\tr_{H_{\beta}}^{r}(f))\ \text{for any}\ f\in \End_{H_{\alpha\beta}}(H_{\alpha}\otimes H_{\beta})
\end{equation}
thanks to Reduction Lemma \ref{hq chinh}. The value of $\mtr_{H_{\alpha}\otimes H_{\beta}}^{\alpha\beta}(f)$ in Equality \eqref{X} is calculated
{\allowdisplaybreaks
\begin{align*}
\mtr_{H_{\alpha}\otimes H_{\beta}}^{\alpha\beta}\left(\epsh{fig425phi}{14ex}\right)\put(-35,1){\ms{f}} \put(-45, 41){\ms{\alpha}} \put(-45, - 40){\ms{\alpha}} \put(-25, 42){\ms{\beta}} \put(-25, - 41){\ms{\beta}}
\ &=\ \mtr_{H_{\alpha}\otimes H_{\beta}}^{\alpha\beta}\left(\epsh{fig466}{18ex}\right) \put(-40,2){\ms{\phi_{\alpha,\beta}}} \put(-40,-25){\ms{\psi_{\alpha,\beta}}} \put(-35,27){\ms{f}} \put(-45, 51){\ms{\alpha}} \put(-45, - 50){\ms{\alpha}} \put(-25, 52){\ms{\beta}} \put(-25, - 51){\ms{\beta}}
\ =\ \mtr_{H_{\alpha\beta}\otimes\ _\varepsilon H_{\beta}}^{\alpha\beta}\left(\epsh{fig466}{18ex}\right) \put(-40,-25){\ms{\phi_{\alpha,\beta}}} \put(-40,28){\ms{\psi_{\alpha,\beta}}} \put(-35,1){\ms{f}} 
\put(-48, 52){\ms{\alpha\beta}} \put(-48, - 50){\ms{\alpha\beta}} \put(-25, 52){\ms{_\varepsilon\beta}} \put(-25, - 51){\ms{_\varepsilon\beta}} \\
&=\ \mtr_{H_{\alpha\beta}}^{\alpha\beta}\left(\epsh{fig467}{20ex} \right) \put(-60,-25){\ms{\phi_{\alpha,\beta}}} \put(-60,28){\ms{\psi_{\alpha,\beta}}} \put(-55,1){\ms{f}} \put(-68, 58){\ms{\alpha\beta}} \put(-67, - 56){\ms{\alpha\beta}} \put(-25, - 50){\ms{_\varepsilon\beta}}
\ =\ \epsh{fig468}{22ex} \put(-43,-29){\ms{\phi_{\alpha,\beta}}} \put(-45,17){\ms{\psi_{\alpha,\beta}}} \put(-38,-7){\ms{f}} \put(-50, 46){\ms{\widetilde{\mu}_{\alpha\beta}}} \put(-56, -46){\ms{\alpha\beta}} \put(-10, - 50){\ms{_\varepsilon\beta}}\\
&= \ \epsh{fig469}{22ex} \put(-46,-3){\ms{f}} \put(-59, 47){\ms{\widetilde{\mu}_{\alpha}}} \put(-63, -45){\ms{\alpha}} \put(-46,19){\ms{g_\beta}} \put(-6, - 50){\ms{_\varepsilon\beta}}\quad = \quad \mtr_{H_{\alpha}}^{\alpha}(\tr_{H_{\beta}}^{r}(f))\ .
\end{align*}}
In the above calculation, we use Theorem \ref{dl phi xi} in the first equality; the cyclicity property of trace in the second equality; Lemma \ref{lemma partial trace of cyclic trace} in the third equality; Equation \eqref{fomula determines trace} in the fourth equality and in the fifth equality we use the two equalities in Proposition \ref{rmq to prove main theorem}.

Second, assume that we have a right modified trace, and hence the symmetric form $\mtr_{P}^{\alpha}$ on $\End_{H_{\alpha}}(P)$ for any projective module $P$ and any $\alpha\in G$. In particular for any $\alpha, \beta \in G$ the symmetric forms $\mtr_{H_{\alpha}}^{\alpha}$ on $\End_{H_{\alpha}}(H_{\alpha})$ and $\mtr_{H_{\alpha}\otimes H_{\beta}}^{\alpha\beta}$ on $\End_{H_{\alpha\beta}}(H_{\alpha}\otimes H_{\beta})$ satisfy 
\begin{equation}\label{Y}
\mtr_{H_{\alpha}\otimes H_{\beta}}^{\alpha\beta}(f)=\mtr_{H_{\alpha}}^{\alpha}(\tr_{H_{\beta}}^{r}(f))\ \text{for any}\ f\in \End_{H_{\alpha\beta}}(H_{\alpha}\otimes H_{\beta}).
\end{equation}
Let $\widetilde{\nu}_{\alpha}(h)=\mtr_{H_{\alpha}}^{\alpha}(R_h)$ for $R_h\in \End_{H_\alpha}(H_\alpha)$ with $h\in H_{\alpha}$. Then $\widetilde{\nu}_{\alpha}(f(1_\alpha))=\mtr_{H_{\alpha}}^{\alpha}(f)$ for $f\in \End_{H_\alpha}(H_\alpha)$ (see Lemma \ref{ds-pt}). We prove that the family $\widetilde{\nu}=(\widetilde{\nu}_{\alpha})_{\alpha\in G}$ satisfies the relation of the right symmetrised $G$-integral.\\
Consider the maps $k=\Delta_{\alpha, \beta}\circ (R_h\otimes \varphi):\ H_{\alpha\beta} \otimes\ _{\varepsilon}H_{\beta} \rightarrow H_{\alpha}\otimes H_{\beta}$ for $h\in H_{\alpha\beta}$ and $\varphi \in \ _{\varepsilon}H_{\beta}^*$. Then $k$ is a morphism of $H_{\alpha\beta}$-modules. The graphical representation of the map $k$ is given in Figure \ref{bd map k}.
\begin{figure}
$$\epsh{fig470}{12ex} \put(-42,-3){\ms{R_h}} \put(-12,8){\ms{\varphi}} \put(-14,-28){\ms{_\varepsilon\beta}} \put(-42,-28){\ms{\alpha\beta}} \put(-52,38){\ms{\alpha}} \put(-27,38){\ms{\beta}}$$
\caption{The graphical representation of the map $k$}\label{bd map k}
\end{figure}
Let $\widetilde{f}=k\circ \psi_{\alpha, \beta}:\ H_{\alpha}\otimes H_{\beta} \rightarrow H_{\alpha}\otimes H_{\beta}$ then $\widetilde{f}\in \End_{H_{\alpha\beta}}(H_{\alpha}\otimes H_{\beta})$. We now calculate the values of the modified trace for $\widetilde{f}\in \End_{H_{\alpha\beta}}(H_{\alpha}\otimes H_{\beta})$ and $\tr_{H_\beta}^{r}(\widetilde{f})\in \End_{H_\alpha}(H_\alpha)$.\\
Firstly, we have
{\allowdisplaybreaks
\begin{align*}
\mtr_{H_{\alpha}\otimes H_{\beta}}^{\alpha\beta}(\widetilde{f})&=\mtr_{H_{\alpha}\otimes H_{\beta}}^{\alpha\beta}(k\circ \psi_{\alpha, \beta})=\mtr_{H_{\alpha\beta}\otimes\ _\varepsilon H_{\beta}}^{\alpha\beta}(\psi_{\alpha, \beta}\circ k)\\
&=\mtr_{H_{\alpha\beta}\otimes\ _\varepsilon H_{\beta}}^{\alpha\beta}\left(\epsh{fig471}{20ex}\right) \put(-59,-30){\ms{R_h}} \put(-59,-56){\ms{\alpha\beta}} \put(-28,-56){\ms{_\varepsilon\beta}} \put(-26,-17){\ms{\varphi}} \put(-76,58){\ms{\alpha\beta}} \put(-56,58){\ms{_\varepsilon\beta}}
=\mtr_{H_{\alpha\beta}\otimes\ _\varepsilon H_{\beta}}^{\alpha\beta}\left(\epsh{fig472}{20ex}\right) \put(-67,-26){\ms{R_h}} \put(-28,-11){\ms{\varphi}}    \\
&=\mtr_{H_{\alpha\beta}\otimes\ _\varepsilon H_{\beta}}^{\alpha\beta}\left(\epsh{fig472a}{16ex}\right) \put(-61,-16){\ms{R_h}} \put(-28,-3){\ms{\varphi}}
=\mtr_{H_{\alpha\beta}\otimes\ _\varepsilon H_{\beta}}^{\alpha\beta}\left(\epsh{fig472b}{16ex}\right)\put(-64,0){\ms{R_h}} \put(-28,-1){\ms{\varphi}}   \\
&=\mtr_{H_{\alpha\beta}}^{\alpha\beta}\left(\epsh{fig472c}{16ex}\right) \put(-77,0){\ms{R_h}} \put(-43,-1){\ms{\varphi}}
=\widetilde{\nu}_{\alpha\beta}(h)\varphi(1_\beta).
\end{align*}}
In the above calculations, we use the cyclicity property in the second equality; the coassociativity of the coproduct in the fourth equality; the antipode properties in the fifth equality and finally we use the partial trace property.\\
Secondly, we have
{\allowdisplaybreaks
\begin{align*}
\mtr_{H_\alpha}^{\alpha}(\tr_{H_{\beta}}^{r}(\widetilde{f}))&=\mtr_{H_\alpha}^{\alpha}(\tr_{H_{\beta}}^{r}(k\circ\psi_{\alpha, \beta}))\\
&=\mtr_{H_\alpha}^{\alpha}\left(\epsh{fig473}{20ex} \right) \put(-85,6){\ms{R_h}} \put(-52,17){\ms{\varphi}} \put(-72,-56){\ms{\alpha}} \put(-32,-56){\ms{\beta}} \put(-92, 57){\ms{\alpha}} \put(-80, 39){\ms{g_\beta}}
 =\epsh{fig473a}{20ex} \put(-61,-5){\ms{R_h}} \put(-33,4){\ms{\varphi}} \put(-66,41){\ms{\widetilde{\nu}_{\alpha}}} \put(-56, 22){\ms{g_\beta}} \put(-55,-43){\ms{\alpha}} \\
&=\epsh{fig473b}{20ex} \put(-65,42){\ms{\widetilde{\nu}_{\alpha}}} \put(-56, 20){\ms{g_\beta}} \put(-23,-10){\ms{\varphi}} \put(-67,-37){\ms{\alpha\beta}} \put(-59, -19){\ms{R_h}}
=\widetilde{\nu}_{\alpha}(h_{(1)})\varphi(g_{\beta}h_{(2)})
\end{align*}}
where we use the left evaluation $\tev$ with the pivot $g_\beta$ and the right coevaluation $\coev$ in the second equality and $\Delta_{\alpha, \beta}(h)=h_{(1)}\otimes h_{(2)}$.\\
By Equality \eqref{Y} one has $\mtr_{H_{\alpha}\otimes H_{\beta}}^{\alpha\beta}(\widetilde{f})=\mtr_{H_\alpha}^{\alpha}(\tr_{H_{\beta}}^{r}(\widetilde{f}))$. This equality means that
\begin{equation*}
\widetilde{\nu}_{\alpha\beta}(h)\varphi(1_\beta)=\widetilde{\nu}_{\alpha}(h_{(1)})\varphi(g_{\beta}h_{(2)})\ \text{for any}\ \varphi\in \ _\varepsilon H_{\beta}^*,\ h\in H_{\alpha\beta}.
\end{equation*}
This equality holds for any $\varphi\in \ _\varepsilon H_{\beta}^*$ implies that $ \widetilde{\nu}_{\alpha\beta}(h)1_\beta=\widetilde{\nu}_{\alpha}(h_{(1)})g_{\beta}h_{(2)}$, i.e. $(\widetilde{\nu}_{\alpha}\otimes g_{\beta})\Delta_{\alpha,\beta}(h)=\widetilde{\nu}_{\alpha\beta}(h)1_\beta$ for any $h\in H_{\alpha\beta}$. Therefore the family $\widetilde{\nu}=(\widetilde{\nu}_{\alpha})_{\alpha\in G}$ is the right symmetrised $G$-integral for $H$.

For the case of the left modified trace the proof is similar.
\end{proof}
\section{Modified trace for the $G$-graded quantum $\mathfrak{sl}(2)$}
In this section we present the symmetrised $G$-integral for the quantization of $\mathfrak{sl}(2)$ and the modified trace on ideal of projective modules of category of the weight modules over $\overline{\mathcal{U}}_{q}\mathfrak{sl}(2)$. It explains clearly the relation between the symmetrised $G$-integral for a pivotal Hopf $G$-coalgebra and the modified trace in associated category $\overline{\mathcal{U}}_{q}\mathfrak{sl}(2)$-mod.
\subsection{Unrestricted quantum $\overline{\mathcal{U}}_{q}\mathfrak{sl}(2)$}
Let $\mathcal{U}_{q}\mathfrak{sl}(2)$ be the $\mathbb{C}$-algebra given by generators $E, F, K, K^{-1}$ 
and relations:
\begin{align*}
  KK^{-1}&=K^{-1}K=1, & KEK^{-1}&=q^2E, & KFK^{-1}&=q^{-2}F, &
  [E,F]&=\frac{K-K^{-1}}{q-q^{-1}}
\end{align*}
where $q=e^{\frac{i\pi}{r}}$ is a $2r^{th}$-root of unity.
The algebra $\mathcal{U}_{q}\mathfrak{sl}(2)$ is a Hopf algebra where the coproduct, counit and
antipode are defined by
\begin{align*}
  \Delta(E)&= 1\otimes E + E\otimes K, 
  &\varepsilon(E)&= 0, 
  &S(E)&=-EK^{-1}, 
  \\
  \Delta(F)&=K^{-1} \otimes F + F\otimes 1,  
  &\varepsilon(F)&=0,& S(F)&=-KF,
    \\
  \Delta(K)&=K\otimes K
  &\varepsilon(K)&=1,
  & S(K)&=K^{-1}
.
\end{align*}
Let $\overline{\mathcal{U}}:=\overline{\mathcal{U}}_{q}\mathfrak{sl}(2)$ be the algebra $\mathcal{U}_{q}\mathfrak{sl}(2)$ modulo the relations
$E^r=F^r=0$ and $C=\mathbb{C}[K^{\pm r}]$ be the commutative Hopf subalgebra in the center of $\overline{\mathcal{U}}_{q}\mathfrak{sl}(2)$. The algebra $\overline{\mathcal{U}}$ is a pivotal Hopf algebra with the pivot $g=K^{1-r}$.
Let $G=(\mathbb{C}/2\mathbb{Z}, +) \xrightarrow{\sim} \Hom_{Alg}(C,\mathbb{C}),\ \overline{\alpha}\mapsto \left(K^r\mapsto q^{r\alpha}:=e^{\frac{i\pi\alpha}{r}}\right)$ and let $\mathcal{U}_{\overline{\alpha}}$ be the algebra $\overline{\mathcal{U}}_{q}\mathfrak{sl}(2)$ modulo the
relations $K^{r}=q^{r\alpha}$ for $\overline{\alpha}\in G$. 
By applying Example \ref{example 1} it follows that $\mathcal{U}=\{\mathcal{U}_{\overline{\alpha}}\}_{{\overline{\alpha}}\in G}$ is the Hopf $G$-coalgebra with the coproduct and the antipode are  determined by the commutative diagrams:
\begin{equation*}
\begin{diagram}
\overline{\mathcal{U}} &\rTo^{\Delta} &\overline{\mathcal{U}} \otimes \overline{\mathcal{U}}\\
\dTo_{p_{\overline{\alpha}+\overline{\beta}}} & &\dTo_{p_{\overline{\alpha}}\otimes p_{\overline{\beta}}}\\
\mathcal{U}_{\overline{\alpha}+\overline{\beta}} &\rTo^{\Delta_{\overline{\alpha},\overline{\beta}}} &\mathcal{U}_{\overline{\alpha}} \otimes \mathcal{U}_{\overline{\beta}}
\end{diagram} \qquad \qquad \qquad
\begin{diagram}
\overline{\mathcal{U}} &\rTo^{S} &\overline{\mathcal{U}} \\
\dTo_{p_{\overline{\alpha}}} & &\dTo_{p_{-\overline{\alpha}}}\\
\mathcal{U}_{\overline{\alpha}} &\rTo^{S_{\overline{\alpha}}} &\mathcal{U}_{-\overline{\alpha}}
\end{diagram}
\end{equation*}
where $p_{\overline{\alpha}}: \ \overline{\mathcal{U}} \rightarrow \mathcal{U}_{\overline{\alpha}}$ is the projective morphism from $\overline{\mathcal{U}}$ to $\mathcal{U}_{\overline{\alpha}}$.
The Hopf $G$-coalgebra $\mathcal{U}=\{\mathcal{U}_{\overline{\alpha}}\}_{{\overline{\alpha}}\in G}$ has the pivotal structure given by $g_{\overline{\alpha}}=q^{-r\alpha}K$.\\
For $\overline{\alpha}=\overline{0}$ the Hopf algebra $\mathcal{U}_{\overline{0}}$ is called the restricted quantum $\mathfrak{sl}(2)$, i.e. the algebra $\mathcal{U}_{q}\mathfrak{sl}(2)$ modulo the relations $E^r=F^r=0$ and $K^r=1$. The right $\overline{0}$-integral is the usual right integral given by 
\begin{equation*}
\mu_{\overline{0}}(E^m F^n K^l)=\eta\delta_{m, r-1}\delta_{n, r-1}\delta_{l,1}
\end{equation*}
where $\eta$ is a constant (see e.g \cite{BBG17}).
By definition of right $G$-integral \eqref{right int} we get
\begin{equation*}
\mu_{\overline{\alpha}}(E^m F^n K^l)=q^{r\alpha}\eta\delta_{m, r-1}\delta_{n, r-1}\delta_{l,1}.
\end{equation*}
One can show that the Hopf $G$-coalgebra $\{\mathcal{U}_{\overline{\alpha}}\}_{{\overline{\alpha}}\in G}$ is $G$-unibalanced.\\
The symmetrised right $G$-integral for $\{\mathcal{U}_{\overline{\alpha}}\}_{{\overline{\alpha}}\in G}$ is determined by
\begin{equation}\label{ct xd sym int}
\widetilde{\mu}_{\overline{\alpha}}(E^m F^n K^l)=\eta\delta_{m, r-1}\delta_{n, r-1}\delta_{l,0}.
\end{equation}
\subsection{Modified trace}
The category $\mathcal{C}=\mathcal{U}\text{-mod}$ is equal to the $G$-graded category of finite dimensional weight modules over $\overline{\mathcal{U}}_{q}\mathfrak{sl}(2)$ (module in which $K$ has a diagonalizable action).
For $\alpha\in \mathbb{C}$ let $V_{\alpha}$ be a $r$-dimensional highest weight module of highest weight $\alpha+r-1$ in $\mathcal{C}$ (see \cite{FrNaBe}).
Recall the modified dimension $d(V_{\alpha})$ of $V_{\alpha}$ for $\alpha\in (\mathbb{C}\setminus\mathbb{Z})\cup r\mathbb{Z}$ was computed:
\begin{equation}\label{modified dimension}
d(V_{\alpha})=\mtr_{V_{\alpha}}(\Id_{V_{\alpha}})=d_0\prod_{k=1}^{r-1}\frac{\{k\}}{\{\alpha+r-k\}}=d_0 \frac{r\{\alpha\}}{\{r\alpha\}}
\end{equation} 
where $\mtr$ is the modified trace on ideal $\textbf{Proj}(\mathcal{C})$ of projective modules and $d_0$ is a non-zero complex number. In \cite{FrNaBe} for the analogous unrolled category, it is normalized by  $d_0=(-1)^{r-1}$.
We now present the way to compute the modified dimension of $V_{\alpha}$ using the symmetrised $G$-integral.\\
By density theorem we have the isomorphism of algebras $$\mathcal{U}_{\overline{\alpha}}\xrightarrow{\sim} \bigoplus_{k\in H_{r}}\End(V_{\alpha+2k})$$
where $H_r=\{ -(r-1), -(r-3), ..., r-1\}$.
Hence we have the isomorphism of left $\mathcal{U}_{\overline{\alpha}}$-modules:  $$\mathcal{U}_{\overline{\alpha}}\xrightarrow{\sim} \bigoplus_{k\in H_{r}}\End(V_{\alpha+2k}) \xrightarrow{\sim} \bigoplus_{k\in H_{r}} V_{\alpha+2k} \otimes \ _\varepsilon V_{\alpha+2k}^{*}\ .$$ 
Consider the quantum Casimir element of $\overline{\mathcal{U}}$ defined by
\begin{equation*} 
  \Omega=FE+\dfrac{Kq+K^{-1}q^{-1}}{\{1\}^2}=
  EF+\dfrac{Kq^{-1}+K^{-1}q}{\{1\}^2}.
\end{equation*} 
For $k\in \mathbb{N}$, by induction one gets
\begin{equation}\label{luy thua pt Casimir}
\prod_{i=0}^{k-1}\left(\Omega - \dfrac{q^{-2i-1}K+q^{2i+1}K^{-1}}{\{1\}^2}\right)=E^kF^k.
\end{equation}
\begin{Lem}\label{luy thua casimir1}
For $k\in \mathbb{N}$ then
\begin{equation*}
\Omega^k-E^kF^k \in \text{Span}_{\mathbb{C}}\{E^jF^jK^i \ | \ j<k,\ i\in \mathbb{Z}\}.
\end{equation*}
\end{Lem}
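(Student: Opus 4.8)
The plan is to deduce the statement directly from the explicit factorization \eqref{luy thua pt Casimir}, combined with a strong induction on $k$. First I would observe that the quantum Casimir $\Omega$ is central in $\overline{\mathcal{U}}$, so each factor $\Omega - \frac{q^{-2i-1}K + q^{2i+1}K^{-1}}{\{1\}^2}$ appearing in \eqref{luy thua pt Casimir} commutes with all the others, and indeed with $K$. Consequently the whole product lies in the commutative subalgebra generated by $\Omega$ and $K$, where it expands as a \emph{monic} polynomial of degree $k$ in $\Omega$ with coefficients in the Laurent ring $\mathbb{C}[K^{\pm 1}]$. Isolating the top term gives
\[
E^k F^k = \Omega^k + \sum_{m=0}^{k-1} p_m(K)\,\Omega^m, \qquad p_m(K)\in \mathbb{C}[K^{\pm 1}],
\]
and hence
\[
\Omega^k - E^k F^k = -\sum_{m=0}^{k-1} p_m(K)\,\Omega^m .
\]
It then remains to show the right-hand side lies in $\text{Span}_{\mathbb{C}}\{E^j F^j K^i \mid j<k,\ i\in\mathbb{Z}\}$.

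For this I would induct on $k$. The base case is immediate: for $k=0$ one has $\Omega^0 - E^0F^0 = 0$, and for $k=1$ the second expression for $\Omega$ gives $\Omega - EF = \frac{Kq^{-1}+K^{-1}q}{\{1\}^2}\in\mathbb{C}[K^{\pm1}]$, which is spanned by the elements $E^0F^0K^i = K^i$. For the inductive step I would apply the hypothesis to each $m<k$, obtaining $\Omega^m = E^m F^m + r_m$ with $r_m \in \text{Span}\{E^j F^j K^i \mid j<m\}$, so that $\Omega^m \in \text{Span}\{E^j F^j K^i \mid j\le m\}$. Multiplying by the Laurent polynomial $p_m(K)$ and pushing the powers of $K$ to the right via $K E^j F^j = E^j F^j K$ — valid because $K E^j K^{-1} = q^{2j}E^j$ and $K F^j K^{-1} = q^{-2j}F^j$ produce the cancelling factors $q^{2j}q^{-2j}=1$ — keeps each summand inside $\text{Span}\{E^j F^j K^i \mid j\le m\}$. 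Since $m\le k-1$, this is contained in $\text{Span}\{E^j F^j K^i \mid j<k\}$; summing over $m$ yields the claim.

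The argument is essentially formal once \eqref{luy thua pt Casimir} is available, so I do not expect a serious obstacle. The two structural facts that make it run are the centrality of $\Omega$ (which turns the product into an honest polynomial in $\Omega$ over $\mathbb{C}[K^{\pm1}]$) and the commutation $K E^j F^j = E^j F^j K$ (which guarantees that multiplying by $p_m(K)$ never raises the $E,F$-degree, so the spanning set is closed under the operations on the right-hand side). The only point needing genuine care is the bookkeeping of the index ranges $j\le m\le k-1 \Rightarrow j<k$ in the inductive step.
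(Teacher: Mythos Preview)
Your proof is correct and follows essentially the same route as the paper: expand the product in \eqref{luy thua pt Casimir} to see that $\Omega^k - E^kF^k$ lies in $\text{Span}_{\mathbb{C}}\{\Omega^j K^i \mid j<k,\ i\in\mathbb{Z}\}$, then apply the induction hypothesis to each $\Omega^j$. The paper compresses this into two sentences, whereas you spell out the supporting facts (centrality of $\Omega$, the commutation $K E^jF^j = E^jF^j K$) that justify the passage; these are exactly the details the paper leaves implicit.
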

\begin{proof}
The proof is by induction on $k$. Indeed, by \eqref{luy thua pt Casimir} $\Omega^k-E^kF^k \in \text{Span}_{\mathbb{C}}\{\Omega^jK^i \ | \ j<k,\ i\in \mathbb{Z}\}$ which by the induction hypothesis is contained in $\text{Span}_{\mathbb{C}}\{E^jF^jK^i \ | \ j<k,\ i\in \mathbb{Z}\}$.
\end{proof}
Following \eqref{ct xd sym int} we have the corollary.
\begin{HQ}\label{bd tinh sym sl2}
For all $k\in \{0, ..., r-2\}$ we have $\widetilde{\mu}_{\overline{\alpha}}\left(\Omega^k\right)=0$. For $k=r-1$ then $\widetilde{\mu}_{\overline{\alpha}}\left(\Omega^{r-1}\right)=\eta$.
\end{HQ}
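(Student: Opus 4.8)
The plan is to reduce everything to the explicit formula \eqref{ct xd sym int} for the symmetrised $G$-integral, which detects only the coefficient of $E^{r-1}F^{r-1}K^0$ in the PBW basis $\{E^mF^nK^l\}$. First I would invoke Lemma \ref{luy thua casimir1} to record the triangular decomposition $\Omega^k = E^kF^k + R_k$, where $R_k \in \text{Span}_{\mathbb{C}}\{E^jF^jK^i \mid j<k,\ i\in\mathbb{Z}\}$. The key point is that every monomial appearing on the right-hand side is already written in the normal-ordered form $E^mF^nK^l$ used by $\widetilde{\mu}_{\overline{\alpha}}$, so the integral may be evaluated term by term by linearity.

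For $0 \le k \le r-2$, I would observe that the leading monomial $E^kF^k = E^kF^kK^0$ has $E$-degree $k \le r-2 < r-1$, so $\widetilde{\mu}_{\overline{\alpha}}(E^kF^k)=0$ because of the factor $\delta_{m,r-1}$ in \eqref{ct xd sym int}; likewise every monomial $E^jF^jK^i$ of $R_k$ has $E$-degree $j<k<r-1$ and hence is annihilated for the same reason. Summing over all contributions yields $\widetilde{\mu}_{\overline{\alpha}}(\Omega^k)=0$.

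For $k=r-1$, the same decomposition gives $\Omega^{r-1}=E^{r-1}F^{r-1}+R_{r-1}$ with $R_{r-1}$ supported on monomials of $E$-degree $j<r-1$. The remainder is killed exactly as before, while the leading term $E^{r-1}F^{r-1}=E^{r-1}F^{r-1}K^0$ realises the unique nonzero pattern $m=n=r-1,\ l=0$, so \eqref{ct xd sym int} gives $\widetilde{\mu}_{\overline{\alpha}}(E^{r-1}F^{r-1})=\eta$. Hence $\widetilde{\mu}_{\overline{\alpha}}(\Omega^{r-1})=\eta$.

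There is no genuine obstacle here: the whole statement follows from the triangularity of $\Omega^k$ with respect to the $E$-degree. The only point requiring care is bookkeeping, namely confirming that the monomials produced by Lemma \ref{luy thua casimir1} are indeed in the normal form $E^mF^nK^l$ to which \eqref{ct xd sym int} applies, and noting that the $K$-powers $i$ occurring in $R_k$ are immaterial precisely because the $E$-degree already forces each of those terms to vanish, so the factor $\delta_{l,0}$ is never tested on them.
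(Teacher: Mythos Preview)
Your argument is correct and follows the same approach as the paper: both proofs combine Lemma \ref{luy thua casimir1} with the explicit formula \eqref{ct xd sym int}, observing that every monomial $E^jF^jK^i$ with $j<r-1$ lies in the kernel of $\widetilde{\mu}_{\overline{\alpha}}$ because of the factor $\delta_{m,r-1}$, while the single surviving term $E^{r-1}F^{r-1}$ contributes $\eta$. Your write-up is in fact more explicit than the paper's, which compresses the same reasoning into one sentence.
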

\begin{proof}
It follows from \eqref{ct xd sym int} that $\text{Span}_{\mathbb{C}}\{E^jF^jK^i \ | \ j<k,\ i\in \mathbb{Z}\}$ is contained in the kernel of $\widetilde{\mu}_{\overline{\alpha}}$ for $k\in \{0, ..., r-2\}$.
\end{proof}
For $\alpha\in \mathbb{C}\setminus \mathbb{Z}$, $\Omega$ acts on $V_{\alpha}$ by the scalar $w_{\alpha}$ which is calculated as follows:
Let $v$ be a highest weight vector of $V_{\alpha}$. The action of $K$ on $v$ defined by $Kv=q^{\alpha+r-1}v$. This implies that $\Omega v=\frac{q^{\alpha+r}+q^{-\alpha-r}}{\{1\}^2} v$, i.e. $w_{\alpha}=\frac{q^{\alpha+r}+q^{-\alpha-r}}{\{1\}^2}$.
The elements $w_{\alpha+2k}, \ 0\leq k< r-1$ are distinct as $w_{\alpha+2i}-w_{\alpha+2j}=\frac{\{i-j\}\{\alpha+r+i+j\}}{\{1\}^2} \neq 0$ for $i\neq j$. \\
We consider in $\mathcal{U}_{\overline{\alpha}}$ the element 
$$L_{\alpha}(\Omega)=\frac{\prod_{k=1}^{r-1}(\Omega-w_{\alpha+2k})}{\prod_{k=1}^{r-1}(w_{\alpha}-w_{\alpha+2k})}.$$
This element is the projector on ${V_{\alpha} \otimes \ _\varepsilon V_{\alpha}^{*}}\simeq \bigoplus_{k=1}^{r}V_{\alpha}$ as $L_{\alpha}(w_{\alpha+2k})=\delta_{0,k}$. The value of symmetrised right $G$-integral on $L_{\alpha}(\Omega)$ is
\begin{align*}
\widetilde{\mu}_{\overline{\alpha}}\left(L_{\alpha}(\Omega)\right)=\dfrac{1}{\prod_{k=1}^{r-1}(w_{\alpha}-w_{\alpha+2k})}\widetilde{\mu}_{\overline{\alpha}}\left(\prod_{k=1}^{r-1}(\Omega-w_{\alpha+2k})\right).
\end{align*}
Corollary \ref{bd tinh sym sl2} implies that $$\widetilde{\mu}_{\overline{\alpha}}\left(\prod_{k=1}^{r-1}(\Omega-w_{\alpha+2k})\right)=\widetilde{\mu}_{\overline{\alpha}}\left(\Omega^{r-1}\right)=\eta.$$
The equality $\prod_{k=1}^{r-1}(w_{\alpha}-w_{\alpha+2k})=(-1)^{r-1}\prod_{k=1}^{r-1}\frac{\{k\}\{\alpha+k\}}{\{1\}^2}$ gives 
\begin{align*}
\widetilde{\mu}_{\overline{\alpha}}\left(L_{\alpha}(\Omega)\right)&=(-1)^{r-1}\eta\prod_{k=1}^{r-1}\frac{\{1\}^2}{\{k\}\{\alpha+k\}}\\
&=\eta\prod_{k=1}^{r-1}\frac{\{1\}^2}{\{k\}^2} (-1)^{r-1}\prod_{k=1}^{r-1}\frac{\{k\}}{\{\alpha+r-k\}}
= \frac{\{1\}^{2r-2}\eta}{r^3 d_0} r d(V_{\alpha})
\end{align*}
where we used the identity $\prod_{k=1}^{r-1}\{k\}^2=(-1)^{r-1}r^2$ in the last equality.\\
It is clear that the coefficient $ \frac{\{1\}^{2r-2}\eta}{r^3 d_0}$ does not depend on $\alpha$. This proves that $\widetilde{\mu}_{\overline{\alpha}}\left(L_{\alpha}(\Omega)\right)=rd(V_{\alpha})$ with the choice $d_0= \frac{\{1\}^{2r-2}\eta}{r^3}$ where $\eta=\widetilde{\mu}_{\overline{\alpha}}\left(E^{r-1}F^{r-1}\right)$.
\bibliographystyle{plain} 
\bibliography{Reference}

\end{document}